


 \documentclass[final,1p,times]{elsarticle}

 \usepackage{graphicx}
\usepackage{makeidx}
\usepackage{amssymb}
 \usepackage{amsthm}
\usepackage{amsmath,amssymb,amsopn,amsfonts,mathrsfs,amsbsy,amscd}






\newcommand{\prs}{\langle\;,\;\rangle}

\newcommand{\too}{\longrightarrow}

\newcommand{\esp}{\quad\mbox{and}\quad}
\newcommand{\Ric}{{\mathrm{Ric}}}

\newcommand{\G}{{\mathfrak{g}}}

\newcommand{\ad}{{\mathrm{ad}}}
\newcommand{\tr}{{\mathrm{tr}}}
\newcommand{\ric}{{\mathrm{ric}}}

\newcommand{\B}{{\cal B}}

\newcommand{\al}{\alpha}
\newcommand{\be}{\beta}

\newcommand{\e}{\epsilon}

\newcommand{\la}{\lambda}

\font\bb=msbm10

\def\B{\hbox{\bb B}}
\def\R{\hbox{\bb R}}

\def\N{\hbox{\bb N}}

\newtheorem{theorem}{Theorem}
\newtheorem{lemma}{Lemma}

\newtheorem{corollary}{Corollary}
\newtheorem{definition}{Definition}

\newtheorem{remark}{Remark}
\newtheorem{example}{Example}
\newtheorem{problem}{Problem}
\newtheorem{conjecture}{Conjecture}
\begin{document}

\begin{frontmatter}


 

\title{ The signature of the Ricci curvature of left-invariant  Riemannian metrics on  nilpotent Lie groups}

 \author[label1,label2,label3]{M.B. Djiadeu Ngaha,  M. Boucetta,  J. Wouafo Kamga}
 \address[label1]{Universit\'{e} de Yaounde I, Facult\'{e} des Sciences, BP 812 Yaound\'{e}, Cameroun\\e-mail: djiadeu@yahoo.fr \\
 }
 \address[label2]{Universit\'e Cadi-Ayyad\\
 Facult\'e des sciences et techniques\\
 BP 549 Marrakech Maroc\\e-mail: m.boucetta@uca.ma
 }
 \address[label3]{Universit\'{e} de Yaounde I, Facult\'{e} des Sciences, BP 812 Yaound\'{e}, Cameroun\\e-mail: wouafoka@yahoo.fr
 
  }



\begin{abstract} Let $(G,h)$ be a nilpotent Lie group endowed with a left invariant Riemannian metric,  $\G$ its Euclidean Lie algebra and $Z(\G)$ the center of $\G$. By using an orthonormal basis adapted to the splitting
 $\G=(Z(\G)\cap[\G,\G])\oplus O^+\oplus (Z(\G)\cap[\G,\G]^\perp)\oplus 
 O^-$, where $O^+$ (resp. $O^-$) is the orthogonal of $Z(\G)\cap[\G,\G]$ in $[\G,\G]$ (resp. is the orthogonal of $Z(\G)\cap[\G,\G]^\perp$ in $[\G,\G]^\perp$), we show that the signature of the Ricci operator of $(G,h)$ is determined by the dimensions of the vector spaces $Z(\G)\cap[\G,\G],$ $Z(\G)\cap[\G,\G]^\perp$ and the signature of a symmetric matrix of order $\dim[\G,\G]-\dim(Z(\G)\cap[\G,\G])$. This permits to associate to $G$ a subset $\mathbf{Sign}(\G)$ of $\N^3$ depending only on the Lie algebra structure, easy to compute and such that, for any left invariant Riemannian metric on $G$, the signature of its Ricci operator belongs to $\mathbf{Sign}(\G)$. We show also that for any nilpotent Lie group of dimension less or equal to 6, $\mathbf{Sign}(\G)$ is actually the set of signatures of the Ricci operators of all left invariant Riemannian metrics on $G$. We give also some general results which support the conjecture that the last result is true in any dimension.

\end{abstract}

\begin{keyword}Signature Ricci curvature \sep Left invariant metrics \sep  Nilpotent Lie algebras  
\MSC 53C25 \sep  53D05 \sep  17B30


\end{keyword}

\end{frontmatter}







\section{Introduction}\label{section1}
It is a well established fact that there are  deep relations between the topology and the geometry of a manifold on one side, and the curvature of a given Riemannian metric on this manifold on the other side. There is a long list of theorems supporting this fact (see for instance \cite{berge}) and many of them  involve the Ricci curvature. It is a symmetric bilinear tensor and hence has a signature. In the case of a homogeneous Riemannian manifold this signature is the same in any point of the manifold. The determination of the possible signatures of the Ricci operators of $G$-invariant metrics on a $G$-homogeneous space can be useful in many geometrical problems, for instance, in the study of the Ricci flow. This has led naturally to the study of the following open problem that constitutes the topic of this paper.
 \begin{problem}\label{p} For a connected Lie group $G$, determine  all the signatures
of the Ricci operators for all left-invariant Riemannian metrics on $G$.
\end{problem} This problem has been studied mainly in the low dimensions.  In \cite{M} and \cite{LM1, LM2}, Problem \ref{p} has been solved, respectively, in the case of 3-dimensional Lie groups and 4-dimensional Lie groups.
 For Lie groups of
dimension $5$ there are only partial results. In \cite{K5}, A.G. Kremlev, solved Problem \ref{p} in the case of five-dimensional nilpotent Lie groups. In this paper, we study Problem \ref{p} when $G$ is nilpotent. We show that, associated to any nilpotent Lie group $G$, there is a subset $\mathbf{Sign}(\G)$ of $\N^3$ depending only on the Lie algebra $\G$ of $G$, easy to compute and such that, for any left invariant Riemannian metric on $G$, the signature of its Ricci operator belongs to $\mathbf{Sign}(\G)$.
In the case where $\dim G\leq6$, $\mathbf{Sign}(\G)$ is actually the set of signatures of the Ricci operators of all left invariant Riemannian metrics on $G$. We give also some general results which support the conjecture that the last result is true in any dimension.

Now, we  introduce $\mathbf{Sign}(\G)$ and state our main results.
 Throughout this paper,
we will use the following convention. The signature of a symmetric operator $A$ on an Euclidean vector space $V$ is the sequence $(s^-,s^0,s^+)$ where $s^+=\sum_{\la_i>0}\dim \ker(A-\la_i \mathrm{I}_V)$, $s^-=\sum_{\la_i<0}\dim \ker(A-\la_i \mathrm{I}_V)$ and $s^0=\dim\ker A$, where $\la_1,\ldots,\la_r$ are the eigenvalues of $A$.

Let $\G$ be a nilpotent $n$-dimensional Lie algebra, $Z(\G)$ its center and $[\G,\G]$ its derived ideal. Put $d=\dim[\G,\G]$, $k=\dim Z(\G)$ and $\ell=
\dim(Z(\G)\cap[\G,\G])$.
We associate to $\G$ the subset of $\N^3$ \begin{equation}\label{sign}\mathbf{Sign}(\G)=
\left\{(n-d-p+m^-,p+m^0,\ell+m^+):\; \max(k-d,0)\leq p\leq k-\ell,\; m^-+m^0+m^+=d-\ell \right\}.\end{equation}
For instance, if $\G$ is 2-step nilpotent then  $[\G,\G]\subset Z(\G)$ and hence $\mathbf{Sign}(\G)=\left\{(n-
k,k-d,d)    \right\}.$ If $\G$ is a filiform nilpotent Lie algebra then $Z(\G)\subset[\G,\G]$, $\dim Z(\G)=1$,  $\dim[\G,\G]=n-2$ and hence $$\mathbf{Sign}(\G)=\left\{(2+m^-,m^0,1+m^+),\; m^-+m^0+m^+=n-3    \right\}.$$

The signature of the Ricci operator of a left invariant Riemannian metric on Lie group of dimension $n$ belongs to $\{(n^-,n^0,n^+):\; n^-+n^0+n^+=n \}$ whose cardinal is $\frac{(n+1)(n+2)}2$.
 Our first main result reduces drastically the set of possibilities  for a nilpotent Lie group.

\begin{theorem}\label{main} Let $(G,h)$ be a nilpotent Lie group endowed with a left invariant Riemannian metric and $\G$ its Lie algebra. Then the signature of the Ricci operator of $(G,h)$ belongs to $\mathbf{Sign}(\G)$.

\end{theorem}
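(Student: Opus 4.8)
The plan is to compute the Ricci operator explicitly in a cleverly chosen orthonormal basis adapted to the splitting $\G=(Z(\G)\cap[\G,\G])\oplus O^+\oplus (Z(\G)\cap[\G,\G]^\perp)\oplus O^-$ announced in the abstract, and then read off the signature. First I would recall the standard formula for the Ricci operator $\Ric$ of a left-invariant metric on a nilpotent Lie group: since $\G$ is nilpotent, $\tr(\ad_x)=0$ for all $x$, so the mean curvature vector vanishes and $\Ric$ reduces to the purely algebraic expression
\begin{equation}\label{ricformula}
\prs(\Ric(x),x)=-\frac12\sum_{i}|[x,e_i]|^2+\frac14\sum_{i,j}\prs([e_i,e_j],x)^2,
\end{equation}
for $(e_i)$ an orthonormal basis. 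The key structural input is that $[\G,\G]\perp$... more precisely that for $x$ in the orthogonal complement $[\G,\G]^\perp$ the second (positive) term in \eqref{ricformula} vanishes, so $\Ric$ is negative semidefinite there, while on $[\G,\G]$ one must keep track of both terms.

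Next I would exploit the center. If $z\in Z(\G)$ then $[z,e_i]=0$ for all $i$, so the first term in \eqref{ricformula} vanishes on $Z(\G)$; hence $\Ric$ is positive semidefinite on $Z(\G)$, and in fact on $Z(\G)\cap[\G,\G]$ one can show it is positive definite (the bracket pairing is nondegenerate there), contributing exactly $\ell$ to $s^+$. On $Z(\G)\cap[\G,\G]^\perp$ both terms vanish, so this whole subspace, of dimension $k-\ell$, lies in $\ker\Ric$; on $O^-$ (dimension $n-d-(k-\ell)$) the operator is negative semidefinite. The remaining, genuinely variable, piece is the restriction of a symmetric endomorphism to $O^+$, of dimension $d-\ell$, whose signature $(m^-,m^0,m^+)$ with $m^-+m^0+m^+=d-\ell$ accounts for the last three summands in \eqref{sign}. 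The only subtlety is that $\Ric$ need not leave these subspaces invariant; one handles this by showing that the off-diagonal blocks, together with the kernel contributions, can only move zero eigenvalues out of the null subspaces in a controlled way, which forces the parameter $p$ (the total dimension of $s^0$ coming from $Z(\G)\cap[\G,\G]^\perp$ together with the part of $O^-$ and of $O^+$ on which $\Ric$ degenerates) to lie in the range $\max(k-d,0)\le p\le k-\ell$ dictated by a rank/nullity count.

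Concretely, the steps in order are: (1) establish \eqref{ricformula} and the vanishing statements above using nilpotency; (2) choose the adapted orthonormal basis and write $\Ric$ in block form with respect to the four summands, identifying the blocks that are forced to vanish; (3) on $[\G,\G]^\perp$ deduce that $\Ric$ is negative semidefinite and that $Z(\G)\cap[\G,\G]^\perp\subseteq\ker\Ric$, so the contribution to $(s^-,s^0,s^+)$ from that part is $(n-d-p+\text{(extra)},\,\dots)$; (4) on $[\G,\G]$ analyse the symmetric form, pin down $Z(\G)\cap[\G,\G]$ as positive definite of dimension $\ell$ and identify the order-$(d-\ell)$ symmetric matrix $M$ whose signature $(m^-,m^0,m^+)$ is free; (5) a bookkeeping lemma that combines the eigenvalue counts of the pieces, with $p$ absorbing all the null directions not already located in $O^+$, to show the total signature has exactly the shape in \eqref{sign} and that the constraints on $p$ hold.

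The main obstacle I expect is step (2)–(3): $\Ric$ does not respect the splitting, so one cannot simply diagonalise block by block. The honest way around this is a minimax/interlacing argument — restricting the quadratic form $\prs(\Ric(\cdot),\cdot)$ to $Z(\G)\cap[\G,\G]$ and to $[\G,\G]^\perp$, where its sign is determined, and to $O^+$, and then using that $s^+\ge$ (dimension of a subspace on which the form is positive definite), $s^-\ge$ (likewise negative), $s^0\le$ (the relevant nullity bound) — to squeeze the signature into $\mathbf{Sign}(\G)$. Verifying that the pairing $O^+\times O^-\to\R$ and the various blocks behave as claimed, and in particular that the positive part is at least $\ell+m^+$ with the negative part at least $n-d-p+m^-$ simultaneously, is where the real work lies; the inequalities $\max(k-d,0)\le p\le k-\ell$ should come out of the simple facts $\dim\ker\Ric\ge k-\ell$ (the central part off the derived ideal together with possible extra degeneracies) and $\dim\ker\Ric\le n-\operatorname{rank}\Ric$ combined with $\operatorname{rank}\Ric\ge\ell+(d-\ell-m^0)$ and an upper bound $p\le k-\ell+m^0$.
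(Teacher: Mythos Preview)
Your proposal has the right overall shape but contains two genuine gaps that would prevent the argument from closing.

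First, a dimension error: you assert that $Z(\G)\cap[\G,\G]^\perp$ has dimension $k-\ell$ and that $O^-$ has dimension $n-d-(k-\ell)$. This is false in general. The orthogonal splitting $\G=[\G,\G]\oplus[\G,\G]^\perp$ need not restrict to a splitting of $Z(\G)$; an element of $Z(\G)$ can have nonzero components in both summands with neither component central. The integer $p:=\dim(Z(\G)\cap[\G,\G]^\perp)$ is \emph{metric-dependent} and only satisfies $\max(k-d,0)\le p\le k-\ell$ (the upper bound is obvious; the lower one is a one-line dimension count). This is precisely the origin of the parameter $p$ in the definition of $\mathbf{Sign}(\G)$, which you instead try to manufacture a posteriori as ``absorbing null directions''. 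Once $p$ is understood correctly, no bookkeeping lemma of the kind you describe is needed.

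Second, and more seriously, the interlacing/minimax detour is unnecessary and probably would not yield the exact form of the signature. You miss two structural facts that make the block analysis exact rather than approximate: (i) the restriction of $\ric$ to $O^-$ is negative \emph{definite}, not merely semidefinite (if $u\in O^-$ has $\ric(u,u)=0$ then $\ad_u+\ad_u^*=0$, so $\ad_u$ is skew and nilpotent, hence zero, forcing $u\in Z(\G)\cap[\G,\G]^\perp=K^-$, which meets $O^-$ only in $0$); and (ii) the off-diagonal block $\ric(Z(\G)\cap[\G,\G],\,O^-)$ vanishes identically, since for $u\in Z(\G)$ one has $\ad_u=0$ and for $v\in[\G,\G]^\perp$ one has $J_v=0$, killing both terms in the polarized formula. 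Together with $K^-\subset\ker\Ric$ (which you do observe), this forces the matrix of $\ric$ in a characteristic basis to have the shape
\[
\begin{pmatrix} Z & V & 0 & 0\\ V^t & X & 0 & W\\ 0&0&0&0\\ 0 & W^t & 0 & Y\end{pmatrix},
\]
with $Z$ positive definite and $Y$ negative definite. A single explicit congruence (clearing $V$ against $Z$ and $W$ against $Y$) reduces this to $\mathrm{diag}(Z,\,X-V^tZ^{-1}V-WY^{-1}W^t,\,0,\,Y)$, from which the signature $(n-d-p+m^-,\,p+m^0,\,\ell+m^+)$ can be read off exactly. This is the paper's Lemma~\ref{theo1}, and it replaces your steps (3)--(5) entirely.
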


As an immediate consequence of this result, if $G$ is 2-step nilpotent then any left invariant Riemannian metric on $G$ has the signature of its Ricci operator equal to $(\dim\G-
\dim Z(\G),\dim Z(\G)-\dim[\G,\G],\dim[\G,\G])$. On the other hand, Theorem \ref{main} has the following corollary which gives a new proof to a result proved first in \cite{nikonorov}.

\begin{corollary}\label{coo} Let $(G,h)$ be a noncommutative nilpotent Lie group endowed with a left invariant Riemannian metric and $\G$ its Lie algebra. Then the Ricci operator of $(G,h)$ has at least two negative eigenvalues.

\end{corollary}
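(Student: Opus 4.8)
The plan is to deduce the corollary directly from Theorem \ref{main}. By that theorem the signature $(n^-,n^0,n^+)$ of the Ricci operator of $(G,h)$ lies in $\mathbf{Sign}(\G)$, so it suffices to show that every triple in $\mathbf{Sign}(\G)$ has first entry at least $2$, i.e. that $n-d-p+m^-\ge 2$ whenever $\max(k-d,0)\le p\le k-\ell$ and $m^-+m^0+m^+=d-\ell$. Since $m^-\ge 0$ and $p\le k-\ell$, we have $n-d-p+m^-\ge n-d-k+\ell$, so the whole statement reduces to the purely Lie-algebraic inequality $n-d-k+\ell\ge 2$.

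Next I would rewrite this inequality invariantly. From the identity $\dim Z(\G)+\dim[\G,\G]=\dim(Z(\G)+[\G,\G])+\dim(Z(\G)\cap[\G,\G])$ we get $\ell=k+d-\dim(Z(\G)+[\G,\G])$, hence $n-d-k+\ell=n-\dim(Z(\G)+[\G,\G])=\dim\bigl(\G/(Z(\G)+[\G,\G])\bigr)$. Thus I must prove that for a noncommutative nilpotent Lie algebra $\G$ one has $\dim\bigl(\G/(Z(\G)+[\G,\G])\bigr)\ge 2$.

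To establish this I would pass to the quotient $\bar\G=\G/Z(\G)$, which is again nilpotent and is nonzero because $\G$ is noncommutative. Since $[\bar\G,\bar\G]$ is the image of $[\G,\G]$ in $\bar\G$, there is a natural isomorphism $\G/(Z(\G)+[\G,\G])\cong\bar\G/[\bar\G,\bar\G]$, so it is enough to bound the dimension of the abelianization of $\bar\G$. A nonzero nilpotent Lie algebra never coincides with its derived ideal, so $\dim(\bar\G/[\bar\G,\bar\G])\ge 1$; and if this dimension were exactly $1$, then $\bar\G$ would be generated by a single element, hence abelian of dimension $1$, which would force $\G/Z(\G)$ to be one-dimensional and therefore $\G$ abelian — contradicting noncommutativity. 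Hence $\dim(\bar\G/[\bar\G,\bar\G])\ge 2$, which gives $n-d-k+\ell\ge 2$ and, combined with the first paragraph, $n^-\ge 2$.

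The argument is short, and the only real content is the elementary Lie-algebra lemma of the last paragraph; I expect no serious obstacle, the key observations being the clean reformulation $n-d-k+\ell=\dim\bigl(\G/(Z(\G)+[\G,\G])\bigr)$ and the reduction to the abelianization of $\G/Z(\G)$. As a sanity check, for the Heisenberg algebra $\mathbf{Sign}(\G)=\{(2,0,1)\}$, which matches the known signature of the Ricci operator of the Heisenberg group.
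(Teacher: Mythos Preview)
Your argument is correct. The reduction to $n-d-k+\ell\ge 2$, the identification $n-d-k+\ell=\dim\bigl(\G/(Z(\G)+[\G,\G])\bigr)$, and the passage to $\bar\G=\G/Z(\G)$ all work as stated; the only nontrivial point is the standard fact that in a nilpotent Lie algebra any lift of a generating set of the abelianization generates the whole algebra, which you invoke when you say ``$\bar\G$ would be generated by a single element''.

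The paper takes a slightly different route. Rather than arguing from the statement of Theorem~\ref{main} and the combinatorics of $\mathbf{Sign}(\G)$, it works inside the proof of that theorem, bounding directly the Ricci signature underestimate $r^-=\dim[\G,\G]^\perp-\dim(Z(\G)\cap[\G,\G]^\perp)$ coming from Lemma~\ref{theo1}. It then splits into the cases $Z(\G)\subset[\G,\G]$ and $Z(\G)\not\subset[\G,\G]$; in the second case it writes $\G=\G_1\oplus I$ with $I$ a central complement of $Z(\G)\cap[\G,\G]$ in $Z(\G)$ and reduces to the inequality $\dim\G_1-\dim[\G_1,\G_1]\ge 2$. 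Your approach avoids the case split and the auxiliary decomposition by observing at once that the minimal first coordinate in $\mathbf{Sign}(\G)$ equals $\dim\bigl(\G/(Z(\G)+[\G,\G])\bigr)=\dim\bigl(\bar\G/[\bar\G,\bar\G]\bigr)$. Both arguments ultimately rest on the same elementary fact---a nonabelian nilpotent Lie algebra has derived ideal of codimension at least two---but you apply it to the quotient $\G/Z(\G)$, while the paper applies it to the ideal $\G_1$.
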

Theorem \ref{main} gives a candidate to be the set of  the signatures of the Ricci operators of all left invariant Riemannian metrics on a nilpotent Lie group. Indeed,
our second main result together with Theorem \ref{main} solve  Problem \ref{p} completely for nilpotent Lie groups up to dimension 6. 
\begin{theorem}\label{main0} Let $G$ be a nilpotent Lie group of dimension $\leq6$  and $\G$ its Lie algebra. Then, for any $(s^-,s^0,s^+)\in \mathbf{Sign}(\G)$, there exists a left invariant Riemannian metric on $G$ for which the Ricci operator has signature $(s^-,s^0,s^+)$.

\end{theorem}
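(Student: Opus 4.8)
The plan is to combine Theorem \ref{main} with an explicit construction of metrics realizing every element of $\mathbf{Sign}(\G)$, carried out separately for each isomorphism class of nilpotent Lie algebra $\G$ of dimension $n\le 6$. By Theorem \ref{main} the signature of the Ricci operator of any left invariant metric lies in $\mathbf{Sign}(\G)$, so it suffices to prove the reverse inclusion: for each $(s^-,s^0,s^+)\in\mathbf{Sign}(\G)$ produce one metric achieving it. Since nilpotent Lie algebras of dimension $\le 6$ are completely classified (there are finitely many, a handful being genuine one-parameter families in dimension $6$), this is in principle a finite check; the work is to organize it so that it does not degenerate into dozens of unrelated computations.

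First I would dispose of the easy strata. If $\G$ is abelian the statement is trivial, and if $\G$ is $2$-step nilpotent then $[\G,\G]\subset Z(\G)$ forces $\mathbf{Sign}(\G)$ to be the single point $(n-k,k-d,d)$ by the computation in the introduction, and Theorem \ref{main} already shows every metric attains it. So one may assume $\G$ is at least $3$-step nilpotent, which in dimension $\le 6$ leaves only a short list (the filiform algebras in dimensions $4,5,6$, plus a few non-filiform $3$- and $4$-step examples in dimension $6$). For the filiform case the target set is $\{(2+m^-,m^0,1+m^+):m^-+m^0+m^+=n-3\}$, so one needs, for each admissible triple $(m^-,m^0,m^+)$, a left invariant metric whose Ricci operator has exactly $2+m^-$ negative, $m^0$ zero and $1+m^+$ positive eigenvalues. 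The natural device is to work in a fixed basis adapted to the splitting $\G=(Z(\G)\cap[\G,\G])\oplus O^+\oplus (Z(\G)\cap[\G,\G]^\perp)\oplus O^-$ used in Theorem \ref{main}, choose the metric to be diagonal in a Jordan-type basis of the Lie algebra, and rescale the basis vectors by parameters $t_i>0$: the Ricci operator then becomes a symmetric matrix whose entries are Laurent monomials in the $t_i$, and one tunes the $t_i$ to move the eigenvalues of the relevant $(d-\ell)\times(d-\ell)$ block across zero in the prescribed pattern. Milnor-type formulas for the Ricci curvature of left invariant metrics on nilpotent groups make these entries explicit.

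The main obstacle, and where the bulk of the argument lies, is controlling the off-diagonal interaction in the $(d-\ell)\times(d-\ell)$ symmetric block: one must show that the set of signatures this block can take, as the metric varies, is all of $\{(m^-,m^0,m^+):m^-+m^0+m^+=d-\ell\}$, not merely a proper subset. A clean way to handle this uniformly is to exhibit, for each $\G$ on the list, a one- or two-parameter family of metrics along which a chosen eigenvalue of the block changes sign while the others stay bounded away from zero, together with a "base" metric (e.g.\ a suitable Nilsoliton or the standard metric) whose signature is known; then a continuity/intermediate-value argument produces all intermediate signatures, and the boundary cases $m^0>0$ are obtained at the crossing parameter values. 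For the genuine families in dimension $6$ one checks that the Ricci block depends continuously on the structure parameter and that the construction is uniform in it, so no single family member is exceptional. I would present the filiform chain (the model $L_n$ with bracket $[e_1,e_i]=e_{i+1}$) in detail as the representative computation, then indicate that each remaining algebra on the classification list is treated by the same recipe, relegating the case-by-case tables to an appendix.
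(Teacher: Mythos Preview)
Your overall architecture---classify, discard the abelian and $2$-step strata, then realize all block signatures by deforming diagonal metrics---matches the paper's, but there is a genuine gap in the ``deform a fixed diagonal basis'' step that would cause the argument to fail on several of the algebras.

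The point you are missing is the role of the integer $p=\dim\bigl(Z(\G)\cap[\G,\G]^\perp\bigr)$ in the definition~\eqref{sign} of $\mathbf{Sign}(\G)$. When $Z(\G)\not\subset[\G,\G]$ (which happens for $L_{5,3}$, $L_{6,3}$, $L_{6,5}$, $L_{6,6}$, $L_{6,7}$, $L_{6,9}$, all of which are $\geq 3$-step), $p$ ranges over a nontrivial interval, and different values of $p$ give different Ricci signature underestimates $(n-d-p,p,\ell)$; the full set $\mathbf{Sign}(\G)$ is a union over these $p$. Now $[\G,\G]^\perp$ depends on the metric, but if you fix a basis $(X_i)$ with $[\G,\G]=\mathrm{span}\{X_1,\ldots,X_d\}$ and take only diagonal rescalings $\langle X_i,X_i\rangle=t_i$, then $[\G,\G]^\perp=\mathrm{span}\{X_{d+1},\ldots,X_n\}$ is \emph{independent of the $t_i$}, hence so is $p$. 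Your one/two-parameter families therefore live entirely inside the stratum of a single $p$ and can never reach the signatures coming from the other values. The paper handles this by passing, for each relevant algebra, to a second (non-nice) basis such as $(e_4,e_3,e_5+e_3+e_1,e_1,e_2)$ for $L_{5,3}$, chosen precisely so that $Z(\G)\cap[\G,\G]^\perp$ drops dimension; the reduced matrix in that basis is then analysed separately.

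A secondary issue: your intermediate-value scheme (``move one eigenvalue across zero while the others stay away'') does not by itself produce the signatures with $m^0\ge 2$, since those require several eigenvalues of the $(d-\ell)\times(d-\ell)$ block to vanish \emph{simultaneously}, a positive-codimension condition that a one-parameter crossing argument will not hit. The paper's device here is sharper: for each algebra with a nice basis it locates a point $(\alpha_1,\ldots,\alpha_n)$ where the entire diagonal reduced matrix vanishes and the Jacobian of the map $(a_{\ell+1},\ldots,a_d)\mapsto(\ric(X_{\ell+1},X_{\ell+1}),\ldots,\ric(X_d,X_d))$ is invertible, so the inverse function theorem yields \emph{all} sign patterns at once in a neighbourhood. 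Your proposal would need either this trick or an explicit multi-parameter transversality argument to close the gap.
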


Our third main result involves the notion of nice basis.
 Recall that a basis $(X_1,\ldots,X_n)$ of a nilpotent Lie algebra $\G$ is called nice if:
 \begin{enumerate}\item For any $i,j$ with $i\not=j$, $[X_i,X_j]=0$ or there exists  $k$ such that $[X_i,X_j]=C_{ij}^k X_k$ with $C_{ij}^k\not=0$,
 \item If $[X_i,X_j]=C_{ij}^k X_k$ and $[X_s,X_r]=C_{sr}^k X_k$ with $C_{ij}^k\not=0$ and $C_{sr}^k\not=0$ then $\{i,j\}\cap\{s,r\}=\emptyset$.
 
 \end{enumerate} This notion appeared first in \cite{lauret}. 
 One of the most important property of a nice basis $\B$ is that any Euclidean inner product on $\G$ for which $\B$ is orthogonal has its Ricci curvature diagonal in $\B$. The proof of Theorem \ref{main0} is based mainly on the fact that all the nilpotent Lie algebras of dimension less or equal to 6 have a nice basis except one. It is also known (see \cite{lauret1}) that any filiform Lie algebra has a nice basis.

 \begin{theorem}\label{nice} Let $G$ be a nilpotent Lie group such that its Lie algebra $\G$ admits a nice basis and $Z(\G)\subset[\G,\G]$ with $\dim[\G,\G]-\dim Z(\G)=1$. Then, for any $(s^-,s^0,s^+)\in\mathbf{Sign}(\G)$, there exists a left invariant Riemannian metric on $G$ for which the Ricci operator has signature  $(s^-,s^0,s^+)$

 \end{theorem}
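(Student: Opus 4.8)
The plan is to write down the Ricci operator explicitly in a nice basis and then engineer the metric coefficients so as to realize each prescribed signature in $\mathbf{Sign}(\G)$. Set $n=\dim\G$ and $d=\dim[\G,\G]$; by hypothesis $\ell:=\dim(Z(\G)\cap[\G,\G])=\dim Z(\G)=d-1$, so $k=\dim Z(\G)=d-1$ and $\max(k-d,0)=0\le p\le k-\ell=0$, forcing $p=0$. Hence $\mathbf{Sign}(\G)=\{(n-d+m^-,m^0,(d-1)+m^+):\,m^-+m^0+m^+=1\}$, which is the three-element set obtained by putting the single ``free'' eigenvalue into the negative, zero, or positive slot. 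So I only need to produce three metrics: one whose Ricci operator is negative definite on the $(d-1+1)$-dimensional ``derived'' block and negative on the complement (giving $m^-=1$), one making that last eigenvalue zero, and one making it positive.

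First I would fix a nice basis $\B=(X_1,\dots,X_n)$ and, reordering if necessary, arrange that $X_1,\dots,X_{d-1}$ span $Z(\G)=Z(\G)\cap[\G,\G]$, that $X_d$ together with $X_1,\dots,X_{d-1}$ spans $[\G,\G]$ (using $\dim[\G,\G]-\dim Z(\G)=1$), and that $X_{d+1},\dots,X_n$ complete the basis. Consider the family of inner products $h_a$ for which $\B$ is orthogonal with $\langle X_i,X_i\rangle=a_i>0$. By the cited property of nice bases (from \cite{lauret}), $\Ric_{h_a}$ is diagonal in $\B$, and its entries are explicit rational functions of the $a_i$ and the structure constants $C_{ij}^k$, via the standard formulas for the Ricci curvature of a left-invariant metric: for a nilpotent group, each diagonal Ricci entry is a sum of terms $\pm\tfrac12\frac{a_k}{a_ia_j}(C_{ij}^k)^2$, negative when $X_k$ appears as a bracket output in the $i$-slot or $j$-slot, positive when $X_i$ is itself a bracket output. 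The key structural observation is that the basis vectors outside $Z(\G)$ and outside $[\G,\G]$ contribute only negative terms to Ricci (they never occur as bracket outputs), the central derived generators $X_1,\dots,X_{d-1}$ contribute only positive Ricci (they are outputs but never inputs of nonzero brackets, being central), and $X_d$ — the one element of $[\G,\G]\setminus Z(\G)$ — is the only vector that is simultaneously a bracket output (contributing positively) and possibly a bracket input (contributing negatively). This is exactly the mechanism that makes the $m^+$-count equal to $\ell=d-1$ plus the sign of a single residual eigenvalue.

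The heart of the argument is then to show that the residual eigenvalue, namely $\Ric_{h_a}(X_d,X_d)$, can be made positive, zero, or negative by suitable choice of the $a_i$, while keeping all the other diagonal entries of the prescribed sign. Write $\Ric_{h_a}(X_d,X_d)=P(a)-N(a)$ where $P(a)=\sum \tfrac12\frac{a_d}{a_ia_j}(C_{ij}^d)^2$ collects the contributions from brackets $[X_i,X_j]=C_{ij}^dX_d$ producing $X_d$, and $N(a)$ collects the (at most) contributions from brackets with $X_d$ as an input, i.e. $[X_d,X_r]=C_{dr}^mX_m$. If $X_d$ is not an input of any nonzero bracket then $N\equiv0$ and $\Ric(X_d,X_d)$ is automatically positive for every metric — but then $\mathbf{Sign}(\G)$ should only contain the $m^+=1$ point, so I would first check that the nice-basis hypothesis together with $Z(\G)\subset[\G,\G]$ forces $X_d$ to be an input somewhere unless the other two signatures are vacuous; more precisely I would show $N\not\equiv0$ whenever $m^0$ or $m^-$ values are claimed to be attainable, so no generality is lost. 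Granting $N\not\equiv0$, I scale: fix all $a_i$ for $i\ne d$ and let $a_d$ vary. Then $P(a)$ is linear in $a_d$ through the factor $a_d$ in the numerator, and the problematic negative terms in $N(a)$ — of the form $\tfrac12\frac{a_m}{a_da_r}(C_{dr}^m)^2$ — scale like $1/a_d$. Hence $\Ric_{h_a}(X_d,X_d)=\alpha\, a_d-\beta/a_d$ with $\alpha,\beta>0$ constants depending on the frozen $a_i$'s, a strictly increasing function of $a_d$ on $(0,\infty)$ ranging over all of $\R$; choosing $a_d$ small, critical, or large gives the negative, zero, positive value respectively. The main obstacle is the bookkeeping needed to guarantee that this single rescaling does not flip the sign of any other diagonal Ricci entry: the entries $\Ric(X_i,X_i)$ for $i\le d-1$ are sums of positive $a_d$-linear and $a_d$-independent terms hence stay positive, and the entries $\Ric(X_j,X_j)$ for $j>d$ are sums of negative terms, some proportional to $a_d$, hence stay negative; so in fact the rescaling is harmless, but this verification — using the nice-basis axiom (2) to ensure the relevant index sets are disjoint and the terms do not cancel — is the step that requires care. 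Once this is in place, the three metrics $h_a$ produce precisely the three signatures in $\mathbf{Sign}(\G)$, completing the proof.
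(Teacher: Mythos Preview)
Your strategy coincides with the paper's: make a nice basis orthogonal, use diagonality of Ricci, and tune $a_d$ so that the single mixed-sign entry $\ric(X_d,X_d)$ passes through zero. The paper packages the sign control on the remaining diagonal entries via its Lemma~\ref{theo1} (the nice basis is automatically a characteristic basis, so the Ricci signature equals $(n-d+m^-,m^0,(d-1)+m^+)$ where $(m^-,m^0,m^+)$ is the signature of the $1\times1$ reduced matrix $(2\ric(X_d,X_d))$), whereas you verify the signs of the other diagonal entries directly; either route works, and your observation that the central derived entries are strictly positive and the non-derived entries strictly negative regardless of $a_d$ is correct.

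Two places need tightening. First, your hedge about $N\equiv 0$ is unnecessary and a bit muddled: by your own arrangement $X_d\notin Z(\G)$, so $X_d$ is the input of some nonzero bracket and $N>0$; and since in a nice basis $[\G,\G]$ is spanned precisely by those basis vectors that occur as bracket outputs, $X_d$ is an output and $P>0$. There is no conditional case. Second---and this is the one genuine omission---your scaling claim $P=\alpha a_d$, $N=\beta/a_d$ with $\alpha,\beta$ independent of $a_d$ requires checking that the index $d$ never appears among the remaining indices in either sum. The nice-basis axioms alone do not give this; nilpotency does, since $[X_d,X_j]=cX_d$ with $c\ne0$ would make $X_d$ an eigenvector of the nilpotent operator $\ad_{X_j}$ with nonzero eigenvalue. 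The paper makes exactly this point, and once you insert it your argument is complete. (A smaller gap: the claim that the nice basis can be ``reordered'' so that $Z(\G)$ is spanned by basis vectors is not automatic either; the paper supplies the short argument using axiom~(2).)
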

 
 This theorem together with Theorem \ref{main} solve Problem \ref{p} for a large class of nilpotent Lie groups. Indeed, in the list of indecomposable seven-dimensional nilpotent Lie algebras given in \cite{ming} there are more than 35 ones satisfying the hypothesis of Theorem \ref{nice}. On the other hand, we will point out the difficulty one can face when trying to generalize Theorem \ref{nice} when $\dim[\G,\G]-\dim Z(\G)\geq2$. We will also give a method using the inverse function theorem to overcome this difficulty. Although, wa have not succeeded yet to show that this method works in the general case, we will use it successfully in the proof of Theorem \ref{main0}. We will refer to this method as inverse function theorem trick. 

The results above,    the tools we will use to establish them and the examples we will give  support the following conjecture.
\begin{conjecture}\label{conj} Let $G$ be a nilpotent Lie group   and $\G$ its Lie algebra. Then, for any $(s^-,s^0,s^+)\in \mathbf{Sign}(\G)$, there exists a left invariant Riemannian metric on $G$ for which the Ricci operator has signature $(s^-,s^0,s^+)$.

\end{conjecture}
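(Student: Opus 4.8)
The plan is to establish the inclusion opposite to Theorem \ref{main}: that every triple in $\mathbf{Sign}(\G)$ is realized as the signature of the Ricci operator of some left invariant metric. The starting point is the block description of $\Ric$ underlying Theorem \ref{main}. For a nilpotent metric Lie algebra the Ricci form is $\ric(X,X)=-\frac12\sum_i|[X,e_i]|^2+\frac14\sum_{i,j}\langle[e_i,e_j],X\rangle^2$ (the intermediate term $\tr(\ad_X^2)$ vanishes by nilpotency), so in an orthonormal basis adapted to the splitting $\G=(Z(\G)\cap[\G,\G])\oplus O^+\oplus(Z(\G)\cap[\G,\G]^\perp)\oplus O^-$ the subspace $Z(\G)\cap[\G,\G]^\perp$ lies in $\ker\Ric$ (both terms vanish) and decouples from everything, the block on $Z(\G)\cap[\G,\G]$ is positive definite, the block on $O^-$ is negative definite, and $O^+$ carries the only genuinely variable part. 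Performing Schur complements against the two definite blocks (Haynsworth inertia additivity) collapses the cross couplings of $O^+$ with $Z(\G)\cap[\G,\G]$ and with $O^-$ and leaves a single symmetric operator $S$ of order $d-\ell$ on $O^+$. Writing $p=\dim(Z(\G)\cap[\G,\G]^\perp)$ and $(m^-,m^0,m^+)$ for the signature of $S$, the signature of $\Ric$ is exactly $\big((n-d-p)+m^-,\,p+m^0,\,\ell+m^+\big)$, so the problem reduces to realizing, independently, each admissible integer $p$ and each composition $(m^-,m^0,m^+)$ of $d-\ell$.

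The integer $p$ is realized purely at the level of the inner product, without touching the bracket. Since $\dim(Z(\G)\cap[\G,\G])=\ell$ is a fixed Lie-algebra invariant while the orthogonal intersection $Z(\G)\cap[\G,\G]^\perp$ depends on the metric, I would choose the complement $V:=[\G,\G]^\perp$ directly: any vector space complement $V$ of $[\G,\G]$ in $\G$ can be declared orthogonal to $[\G,\G]$ by a suitable inner product, and $\dim(Z(\G)\cap V)$ runs through every value in $[\max(k-d,0),\,k-\ell]$ as $V$ ranges over such complements (the lower bound being the transversal generic intersection $\dim Z+\dim V-n$, the upper bound being attained when $V$ contains a complement of $Z(\G)\cap[\G,\G]$ in $Z(\G)$). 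Choosing $V$ to meet $Z(\G)$ in dimension exactly $p$ fixes the null contribution of the center, and the remaining freedom in the inner product on $[\G,\G]$ and on $V$ is then spent on $S$.

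The core difficulty is to realize an arbitrary signature $(m^-,m^0,m^+)$ of $S$, and here I would use the \emph{inverse function theorem trick} introduced after Theorem \ref{nice}. When $\G$ admits a nice basis, $\Ric$ is diagonal in that basis, hence so is $S$, and each diagonal entry is an explicit rational function of the squared norms of the basis vectors; rescaling those norms drives the entries through positive, zero and negative values, realizing every composition (this is Theorem \ref{nice} when $d-\ell=1$, and extends with more bookkeeping to higher $d-\ell$ provided enough entries can be moved independently). For a $\G$ without a nice basis the entries of $S$ are coupled through the off-diagonal structure constants, so instead I would fix a convenient base metric $h_0$, parametrize nearby metrics by a finite-dimensional family, regard $h\mapsto S(h)\in\mathrm{Sym}(d-\ell)$ as a smooth map, and compute its differential at $h_0$. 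If that differential is surjective (or at least transverse to each signature stratum of $\mathrm{Sym}(d-\ell)$), the inverse function theorem produces an open set of realizable operators $S$, hence every nondegenerate signature near that of $S(h_0)$; the degenerate signatures with $m^0>0$ are then captured as limits along curves in the parameter space, using the scale invariance of the Ricci signature to reach the boundary.

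The step I expect to be the genuine obstacle is precisely the one the paper flags for $d-\ell=\dim[\G,\G]-\dim Z(\G)\ge 2$: proving surjectivity of the differential of $h\mapsto S(h)$ and then reaching the boundary signatures $m^0>0$. Absent a nice basis there is no a priori reason the eigenvalues of $S$ can be pushed across zero independently, since a single structure constant feeds several entries of $S$ at once, and the inverse function theorem is only local and silent about which stratified pieces of $\mathrm{Sym}(d-\ell)$ are actually met. Checking the rank condition by hand succeeds for $\dim G\le 6$ (Theorem \ref{main0}) and for the nice-basis codimension-one family (Theorem \ref{nice}), but upgrading these verifications to a single argument valid for every nilpotent $\G$ is exactly the content that Conjecture \ref{conj} leaves open.
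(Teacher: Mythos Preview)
The statement you are addressing is Conjecture~\ref{conj}, which the paper does \emph{not} prove; it is explicitly left open and is only supported by the partial results Theorem~\ref{main0} (dimension $\leq 6$) and Theorem~\ref{nice} (nice basis with $\dim[\G,\G]-\dim Z(\G)=1$). Accordingly there is no ``paper's own proof'' to compare against, and your write-up is not a proof either: you yourself say in the last paragraph that the surjectivity of $h\mapsto S(h)$ and the access to the boundary strata $m^0>0$ remain unestablished, and that this ``is exactly the content that Conjecture~\ref{conj} leaves open.'' That is an accurate assessment.

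What you have produced is a faithful reconstruction of the paper's strategy. Your block decomposition and Schur-complement reduction to a symmetric matrix $S$ of order $d-\ell$ is precisely Lemma~\ref{theo1} and the reduced matrix $\mathrm{R}(\ric,\B)$ of \eqref{matrix}. Your argument that the parameter $p=\dim(Z(\G)\cap[\G,\G]^\perp)$ can be set to any value in $[\max(k-d,0),\,k-\ell]$ by choosing the complement $V$ of $[\G,\G]$ is what the paper does implicitly in the case-by-case treatment of $L_{5,3}$, $L_{6,3}$, $L_{6,5}$, $L_{6,6}$, $L_{6,7}$, $L_{6,9}$ (the ``first type / second type'' of Euclidean products). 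Your appeal to the inverse function theorem for the signature of $S$ is exactly the trick of Section~\ref{section3} and Definition~\ref{nicedef}. The obstacle you single out---independent control of the eigenvalues of $S$ when $d-\ell\geq 2$ and no nice basis is available---is the same one the paper flags after the proof of Theorem~\ref{nice}.

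In short: your plan is the paper's plan, your diagnosis of the gap is the paper's diagnosis, and neither closes it. If you intend to attack the conjecture, the new content would have to be a general mechanism guaranteeing either the rank condition on $DF$ at a common zero, or some other way to force $\mathrm{R}(\ric,\B)$ through every signature stratum; nothing in your proposal goes beyond what the paper already offers toward that.
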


The paper is organized as follows. In Sections \ref{section2}-\ref{section3}, we prove a key lemma (see Lemma \ref{theo1}) which implies that, for any nilpotent Lie group $(G,h)$ endowed with a left invariant Riemannian metric, by using an orthonormal basis adapted to the splitting of an Euclidean Lie algebra
 $$\G=(Z(\G)\cap[\G,\G])\oplus O^+\oplus (Z(\G)\cap[\G,\G]^\perp)\oplus 
 O^-,$$ where $O^+$ (resp. $O^-$) is the orthogonal of $Z(\G)\cap[\G,\G]$ in $[\G,\G]$ (resp. is the orthogonal of $Z(\G)\cap[\G,\G]^\perp$ in $[\G,\G]^\perp$),  the signature of the Ricci operator of $(G,h)$ is determined by the dimensions of the vector spaces $Z(\G)\cap[\G,\G],$ $Z(\G)\cap[\G,\G]^\perp$ and the signature of a symmetric matrix of order $\dim[\G,\G]-\dim(Z(\G)\cap[\G,\G])$.  Thereafter, we give a proof of Theorem \ref{main}, its corollary and Theorem \ref{nice}. At the end of Section \ref{section3}, we outline   the inverse function theorem trick that we will use in the proof of Theorem \ref{main0}. Section \ref{section3} is devoted to a proof of Theorem \ref{main0}.
We summarize at the end of the paper in a table all the realizable signatures of Ricci operators on nilpotent Lie groups up to dimension 6. It reduces, according to Theorems \ref{main} and \ref{main0}, to computing $\mathbf{Sign}(\G)$ for any nilpotent Lie algebra of dimension less or equal to 6.
Since we will use  the classification of 5-dimensional and 6-dimensional Lie nilpotent algebras given by Willem A. de Graaf in \cite{graaf}, we give here the lists of  these Lie algebras from \cite{graaf}.

\begin{center}
 \begin{tabular}{|*{2}{c|}}
  \hline
  Lie algebra $\G$ & Nonzero commutators \\
  \hline
  $L_{6,2}= L_{5,2}\oplus \mathbb{R}$& $[e_{1},e_{2}]= e_{3}$\\
  \hline
  $ L_{6,3}= L_{5,3}\oplus \mathbb{R}$& $ [e_{1},e_{2}]= e_{3}, [e_{1},e_{3}]= e_{4}$\\
  \hline
 $ L_{6,4}= L_{5,4}\oplus \mathbb{R}$ & $[e_{1},e_{2}]= e_{5}, [e_{3},e_{4}] = e_{5}$ \\
  \hline
  $L_{6,5}= L_{5,5}\oplus \mathbb{R}$ & $[e_{1},e_{2}]= e_{3}, [e_{1},e_{3}] = e_{5},[e_{2},e_{4}]=e_{5}$\\
  \hline
  $L_{6,6}= L_{5,6}\oplus \mathbb{R}$ & $[e_{1},e_{2}]= e_{3}, [e_{1},e_{3}] = e_{4},[e_{1},e_{4}] = e_{5}, [e_{2},e_{3}] = e_{5}$\\
  \hline
$L_{6,7}= L_{5,7}\oplus \mathbb{R}$ & $[e_{1},e_{2}]= e_{3}, [e_{1},e_{3}] = e_{4}, [e_{1},e_{4}] = e_{5}$\\
  \hline
$L_{6,8}= L_{5,8}\oplus \mathbb{R}$ & $[e_{1},e_{2}]= e_{4}, [e_{1},e_{3}] = e_{5}$ \\
  \hline
  $L_{6,9}= L_{5,9}\oplus \mathbb{R}$ & $[e_{1},e_{2}]= e_{3}, [e_{1},e_{3}] = e_{4},[e_{2},e_{3}] = e_{5}$\\
  \hline
  $L_{6,10}$&$[e_{1},e_{2}]= e_{3},[e_{1},e_{3}]= e_{6},[e_{4},e_{5}]=
  e_{6}$\\
  \hline
  $L_{6,11}$&$[e_{1},e_{2}]= e_{3},[e_{1},e_{3}]= e_{4},[e_{1},e_{4}]=
  e_{6},[e_{2},e_{3}]= e_{6}$,$[e_{2},e_{5}]= e_{6}$\\
  \hline
  $L_{6,12}$&$[e_{1},e_{2}]= e_{3},[e_{1},e_{3}]= e_{4},[e_{1},e_{4}]=
  e_{6},[e_{2},e_{5}]= e_{6}$\\
  \hline
$L_{6,13}$&$[e_{1},e_{2}]= e_{3},[e_{1},e_{3}]= e_{5},[e_{2},e_{4}]=
  e_{5},[e_{1},e_{5}]= e_{6}$, $[e_{3},e_{4}]= e_{6}$\\
  \hline
  $L_{6,14}$&$[e_{1},e_{2}]= e_{3},[e_{1},e_{3}]= e_{4},[e_{1},e_{4}]=
  e_{5},[e_{2},e_{3}]= e_{5}$,\\&$[e_{2},e_{5}]= e_{6},[e_{3},e_{4}]=-e_{6}$\\
  \hline
  $L_{6,15}$&$[e_{1},e_{2}]= e_{3},[e_{1},e_{3}]= e_{4},[e_{1},e_{4}]=
  e_{5},[e_{2},e_{3}]= e_{5}$, $[e_{2},e_{4}]= e_{6}$\\&$[e_1,e_5] = e_6$\\
  \hline
  $L_{6,16}$&$[e_{1},e_{2}]= e_{3},[e_{1},e_{3}]= e_{4},[e_{1},e_{4}]=
  e_{5},[e_{2},e_{5}]= e_{6}$, $[e_{3},e_{4}]=- e_{6}$\\
 \hline
 
  $L_{6,17}$&$[e_{1},e_{2}]= e_{3},[e_{1},e_{3}]= e_{4},[e_{1},e_{4}]=
  e_{5},[e_{1},e_{5}]= e_{6}$, $[e_{2},e_{3}]= e_{6}$\\
   \hline
  $L_{6,18}$&$[e_{1},e_{2}]= e_{3},[e_{1},e_{3}]= e_{4},[e_{1},e_{4}]=
  e_{5},[e_{1},e_{5}]= e_{6}$\\
  \hline
  $L_{6,19}(\epsilon)$&$[e_{1},e_{2}]= e_{4},[e_{1},e_{3}]= e_{5},[e_{2},e_{4}]=
  e_{6},[e_{3},e_{5}]= \epsilon e_{6}$\\
  \hline
  $L_{6,20}$&$[e_{1},e_{2}]= e_{4},[e_{1},e_{3}]= e_{5},[e_{1},e_{5}]=
  e_{6},[e_{2},e_{4}]= e_{6}$\\
  \hline
  $L_{6,21}(\epsilon)$&$[e_{1},e_{2}]= e_{3},[e_{1},e_{3}]= e_{4},[e_{2},e_{3}]=
  e_{5},[e_{1},e_{4}]= e_{6}$, $[e_{2},e_{5}]= \epsilon e_{6}$\\
  \hline
  $L_{6,22}(\epsilon)$&$[e_{1},e_{2}]= e_{5},[e_{1},e_{3}]= e_{6},[e_{2},e_{4}]=\epsilon e_{6},[e_{3},e_{4}]= e_{5}$\\
  \hline
  $L_{6,23}$&$[e_{1},e_{2}]= e_{3},[e_{1},e_{3}]= e_{5},[e_{1},e_{4}]=
  e_{6},[e_{2},e_{4}]= e_{5}$\\
  \hline
  $L_{6,24}(\epsilon)$&$[e_{1},e_{2}]= e_{3},[e_{1},e_{3}]= e_{5},[e_{1},e_{4}]=\epsilon
  e_{6},[e_{2},e_{3}]= e_{6}$, $[e_{2},e_{4}]=e_{5}$\\
  \hline
  $L_{6,25}$&$[e_{1},e_{2}]= e_{3},[e_{1},e_{3}]= e_{5},[e_{1},e_{4}]=
  e_{6}$\\
  \hline
  $L_{6,26}$&$[e_{1},e_{2}]= e_{4},[e_{1},e_{3}]= e_{5},[e_{2},e_{3}]= e_{6}$\\
  \hline
  \end{tabular}
  \end{center}
  \begin{center}
\textbf{Table $1:\epsilon\in\{-1,0,1\}$:} List of six-dimensional nilpotent Lie algebras
\end{center}

\begin{center}
 \begin{tabular}{|*{2}{c|}}
  \hline
  Lie algebra $\G$ & Nonzero commutators \\
  \hline
 $L_{5,2}=L_{3,2}\oplus\R^2$& $[e_{1},e_{2}]= e_{3}$\\
  \hline
  $ L_{5,3}=L_{4,3}\oplus\R$& $ [e_{1},e_{2}]= e_{3}, [e_{1},e_{3}]= e_{4}$\\
  \hline
 $ L_{5,4}$ & $[e_{1},e_{2}]= e_{5}, [e_{3},e_{4}] = e_{5}$ \\
  \hline
  $L_{5,5}$ & $[e_{1},e_{2}]= e_{3}, [e_{1},e_{3}] = e_{5},[e_{2},e_{4}]=e_{5}$\\
  \hline
  $L_{5,6}$ & $[e_{1},e_{2}]= e_{3}, [e_{1},e_{3}] = e_{4},[e_{1},e_{4}] = e_{5},[e_{2},e_{3}] = e_{5} $\\
  \hline
$L_{5,7}$ & $[e_{1},e_{2}]= e_{3}, [e_{1},e_{3}] = e_{4}, [e_{1},e_{4}] = e_{5}$\\
  \hline
$L_{5,8}$ & $[e_{1},e_{2}]= e_{4}, [e_{1},e_{3}] = e_{5}$ \\
  \hline
  $L_{5,9}$ & $[e_{1},e_{2}]= e_{3}, [e_{1},e_{3}] = e_{4},[e_{2},e_{3}] = e_{5}$\\
  \hline
\end{tabular}
\end{center}
\begin{center}
\textbf{Table $2$}: List of five-dimensional nilpotent Lie algebras
\end{center}

\section{ Reduction of the Ricci operator
of a Riemannian Lie group and Ricci signature underestimate   }\label{section2}

In this section, we  prove a key lemma that will play a crucial  role in the proofs of our main results.

A Lie group $G$ together with a left-invariant Riemannian metric $h$ is called a 
\emph{Riemannian Lie group}. The  metric $h$ 
defines a  symmetric positive definite  inner
product $\prs=h(e)$ on the Lie algebra $\G$ of $G$, and conversely, any  symmetric definite positive
inner product on $\G$
gives rise
to an unique  left-invariant Riemannian metric on $G$.\\ We will refer to a Lie
algebra endowed with a  symmetric positive definite  inner
product as an \emph{Euclidean Lie algebra}. \\ The
Levi-Civita connection of $(G,h)$ defines a product $\mathrm{L}:\G\times\G\too\G$ called \emph{ Levi-Civita product}  given by  Koszul's
formula
\begin{eqnarray}\label{levicivita}2\langle
\mathrm{L}_uv,w\rangle&=&\langle[u,v],w\rangle+\langle[w,u],v\rangle+
\langle[w,v],u\rangle.\end{eqnarray}
For any $u,v\in\G$, $\mathrm{L}_{u}:\G\too\G$ is skew-symmetric and $[u,v]=\mathrm{L}_{u}v-\mathrm{L}_{v}u$.
The curvature on $\G$ is given by
$$
 \label{curvature}K(u,v)=\mathrm{L}_{[u,v]}-[\mathrm{L}_{u},\mathrm{L}_{v}].
$$
The Ricci curvature on $\G$ is defined by $\mathrm{ric}(u,v)=\mathrm{tr}\left(w\too
K(u,w)v\right)$. 
The \emph{mean curvature vector} on $\G$ is the vector $H$ defined by the following relation
$\label{mean}\langle H,u\rangle=\mathrm{tr(\ad_u)},
$ where $\ad_u:\G\too\G$, $v\mapsto [u,v]$.
It is well-known that $\mathrm{ric}$ is  given by
\begin{equation}\label{ricci1}
 \ric(u,v)=-\frac12\tr(\ad_u\circ\ad_v)-\frac12\tr(\ad_u\circ
\ad_v^*)-\frac14\tr(J_u\circ J_v)-\frac12{\langle}\ad_Hu,v{\rangle}-
\frac12{\langle}\ad_Hv,u{\rangle},\end{equation}
where $J_u$ is the
skew-adjoint endomorphism given by $J_uv=\ad_v^*u$ ($\ad_u^*$ is the adjoint of $\ad_u$
with respect to $\prs$). The Ricci operator is
 the auto-adjoint endomorphism $\mathrm{Ric}:\G\too\G$ given by $\langle \mathrm{Ric}(u),v\rangle=\ric(u,v)$. The signature of $\Ric$ is called \emph{ Ricci signature} of $(G,h)$ or $(\G,\prs)$.

 We consider now the Lie subalgebra of left invariant Killing vector fields on $G$ given by
 $$K(\prs)=\left\{u\in\G,\ad_u+\ad_u^*=0\right\}.$$ It contains obviously the center $Z(\G)$ of $\G$. Put $K^+(\prs)=K(\prs)\cap[\G,\G]$ and $K^-(\prs)=K(\prs)\cap[\G,\G]^\perp$.  
 Denote by $O^+$ (resp. $O^-$) the orthogonal of $K^+(\prs)$ in $[\G,\G]$ (resp. the orthogonal of $K^-(\prs)$ in $[\G,\G]^\perp$).
 Then
 \begin{equation}\label{split}\G=K^+(\prs)\oplus O^+\oplus  K^-(\prs)\oplus O^-.\end{equation}
 We call this splitting \emph{characteristic splitting} of $(\G,\prs)$ and any  basis of $\G$ of the form $\B_1\cup \B_2\cup \B_3\cup\B_4$ where $\B_1$, $\B_2$, $\B_3$ and $\B_4$ are, respectively, bases of $K^+(\prs), O^+,  K^-(\prs), O^-$ is called \emph{characteristic basis}. 
 
 \begin{lemma}\label{theo1}With the hypothesis and the notations above, let $n_1=\dim K^+(\prs)$, $n_2=\dim O^+$, $n_3=\dim K^-(\prs)$ and $n_4=\dim O^-$. Then  we have: \begin{enumerate}\item[$(i)$] $K^-(\prs)\subset\ker\Ric$ and if $K^+(\prs)\not=\{0 \}$ then the restriction of $\ric$ to 
 $K^+(\prs)$ is positive definite. \item[$(ii)$] If 
 $O^-\not=\{0 \}$ then the restriction of $\ric$ to 
  $O^-$ is negative definite and $\ric(K^+(\prs),O^-)=0$.
 \item[$(iii)$] For any  characteristic basis $\B$ of $\G$, the matrix of the Ricci tensor in $\B$ is given by  
 \[ \mathrm{Mat}(\ric,\B)= \frac12\left[
 \begin{array}{cccc}
 Z & V & 0 & 0 \\
       V^{t} & X & 0 & W \\
       0 & 0 & 0 & 0 \\
       0 & W^{t} & 0 & Y
     \end{array}
   \right] ,\]where $Z,X,Y$ are square matrices of order $n_1$, $n_2$, $n_4$, respectively, and   the Ricci signature of $(\G,\prs)$ is given by
  \begin{equation}\label{eqimportant}(s^-,s^0,s^+)=(\dim[\G,\G]^\perp-\dim K^-(\prs)+m^-,\dim K^-(\prs)+m^0,\dim K^+(\prs)+m^+),\end{equation}
 where $(m^-,m^0,m^+)$ is the signature of the symmetric matrix
 \begin{equation}\label{matrix}\mathrm{R}(\ric,\B)=X-V^tZ^{-1}V-WY^{-1}W^t.\end{equation}
 \end{enumerate}
 
 \end{lemma}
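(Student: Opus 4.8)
The plan is to work in a fixed characteristic basis $\B=\B_1\cup\B_2\cup\B_3\cup\B_4$ adapted to the splitting $\G=K^+(\prs)\oplus O^+\oplus K^-(\prs)\oplus O^-$ and to read off all three statements from formula \eqref{ricci1} for $\ric$, using the following two elementary facts about a nilpotent Euclidean Lie algebra: first, since $\G$ is nilpotent, $\tr(\ad_u)=0$ for all $u$, so the mean curvature vector $H$ vanishes and the last two terms in \eqref{ricci1} disappear; thus
\[
\ric(u,v)=-\tfrac12\tr(\ad_u\circ\ad_v)-\tfrac12\tr(\ad_u\circ\ad_v^*)-\tfrac14\tr(J_u\circ J_v).
\]
Second, the image of every $\ad_u$ and of every $\ad_v^*$ lies in $[\G,\G]$, while $J_uv=\ad_v^*u$, so $J_u$ kills $[\G,\G]^\perp$ and has image in $[\G,\G]^\perp$ only through its dependence on $u\in[\G,\G]$; I will track which of these three traces survive when $u$ or $v$ is taken in each of the four blocks.

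For $(i)$: if $u\in K^-(\prs)\subset[\G,\G]^\perp$, then $\ad_u=-\ad_u^*$ and, since $u\in[\G,\G]^\perp$, one checks $\ad_u\circ\ad_v^*$ and $J_u\circ J_v$ contribute nothing in a way that forces $\ric(u,\cdot)\equiv0$ — more precisely, for $u\in K^-(\prs)$ the first two traces cancel (as $\ad_u^*=-\ad_u$ gives $\tr(\ad_u\ad_v)+\tr(\ad_u\ad_v^*)=\tr(\ad_u(\ad_v+\ad_v^*))$, and pairing with the fact that $\ad_u$ maps $\G$ into $[\G,\G]$ while $u$ being central-like in the Killing sense makes $\ad_u$ commute appropriately with the adjoint operators), and $J_u=0$ because $\ad_v^*u\in\ad_v^*([\G,\G]^\perp)$ vanishes when $u\in[\G,\G]^\perp$ is Killing. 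Hence $K^-(\prs)\subset\ker\Ric$. For the restriction to $K^+(\prs)$: if $u\in K^+(\prs)$ then $\ad_u$ is skew-symmetric so $-\tfrac12\tr(\ad_u\circ\ad_u)=\tfrac12\tr(\ad_u\ad_u^t)\ge0$ and $-\tfrac12\tr(\ad_u\ad_u^*)=\tfrac12\tr(\ad_u\ad_u^t)\ge0$, while $-\tfrac14\tr(J_uJ_u)=\tfrac14\tr(J_uJ_u^t)\ge0$; so $\ric(u,u)\ge0$ and equality forces $\ad_u=0$ and $J_u=0$, i.e. $u\in Z(\G)\cap[\G,\G]^\perp$, which meets $K^+(\prs)\subset[\G,\G]$ only in $0$. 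This gives positive definiteness on $K^+(\prs)$.

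For $(ii)$ and $(iii)$: for $u\in O^-\subset[\G,\G]^\perp$ one has $J_u=0$ as above and $\ad_u+\ad_u^*\neq0$ on the orthogonal complement of $K^-(\prs)$, so $\ric(u,u)=-\tfrac12\tr(\ad_u(\ad_u+\ad_u^*))=-\tfrac12\tr(\ad_u\ad_u^*+\ad_u^2)$; writing $\ad_u=S+A$ with $S$ symmetric, $A$ skew, this equals $-\tr(S^2)$ up to the vanishing-trace terms, hence $\le0$, with equality iff $S=0$ iff $u\in K^-(\prs)$, so negative definite on $O^-$. The vanishing $\ric(K^+(\prs),O^-)=0$ and $\ric(K^-(\prs),\G)=0$ produce the claimed block shape of $\mathrm{Mat}(\ric,\B)$; note in particular there is no $K^+$–$O^-$ block and no row/column for $K^-$. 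Finally, $(iii)$ is pure linear algebra: given a symmetric matrix in the block form displayed, with $Z$ positive definite (by $(i)$), $Y$ negative definite (by $(ii)$), and the $K^-$ block zero, a congruence (block Gaussian elimination, i.e. completing the square against the $Z$ and $Y$ blocks) reduces it to $\mathrm{diag}(Z,\ \mathrm{R}(\ric,\B),\ 0,\ Y)$ with $\mathrm{R}(\ric,\B)=X-V^tZ^{-1}V-WY^{-1}W^t$; Sylvester's law of inertia then gives the signature as $(\dim O^-+m^-,\ \dim K^-(\prs)+m^0,\ \dim K^+(\prs)+m^+)$, and $\dim O^-=\dim[\G,\G]^\perp-\dim K^-(\prs)$, which is \eqref{eqimportant}.

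I expect the main obstacle to be the careful bookkeeping in $(i)$–$(ii)$ showing that $J_u=0$ and that the first two trace terms genuinely cancel (respectively reduce to $-\tr(S^2)$) when $u$ lies in $[\G,\G]^\perp$; this hinges on the identity $\tr(\ad_u\circ\ad_v^*)=\langle\,[\text{something}],[\text{something}]\,\rangle$-type manipulations together with $\mathrm{Im}\,\ad_u,\ \mathrm{Im}\,\ad_u^*\subset[\G,\G]$ and $\ad_w^*u=0$ for $u\in[\G,\G]^\perp$ — the latter because $\langle\ad_w^*u,v\rangle=\langle u,[w,v]\rangle=0$. Once these orthogonality facts are in hand, everything else is the congruence reduction, which is routine.
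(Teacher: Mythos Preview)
Your overall architecture matches the paper's: establish the block zeros and definiteness claims from formula \eqref{ricci1}, then use a congruence (block elimination against $Z$ and $Y$) to reduce to $\mathrm{diag}(Z,\mathrm{R}(\ric,\B),0,Y)$ and read off the signature. The part $(iii)$ argument is fine and is exactly what the paper does.

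There are, however, two genuine problems in parts $(i)$--$(ii)$.

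\textbf{Scope.} The lemma is stated in Section~\ref{section2} for an arbitrary Riemannian Lie group; nilpotency is only introduced in Section~\ref{section3}. Your opening move --- ``since $\G$ is nilpotent, $H=0$'' --- is therefore not available. The paper works with the full formula \eqref{ricci1} including the $H$-terms; these vanish in each case for reasons unrelated to nilpotency (for $u\in K(\prs)$ one uses $\langle[H,u],u\rangle=\langle H,\ad_u u\rangle=0$ via skewness of $\ad_u$; for $u\in[\G,\G]^\perp$ one uses $[H,u]\in[\G,\G]\perp u$; etc.). You need to supply these checks rather than assume $H=0$.

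\textbf{A sign error in the $K^+$ argument.} For $u\in K^+(\prs)$ you write
\[
-\tfrac12\tr(\ad_u\ad_u)=\tfrac12\tr(\ad_u\ad_u^t)\ge0\quad\text{and}\quad -\tfrac12\tr(\ad_u\ad_u^*)=\tfrac12\tr(\ad_u\ad_u^t)\ge0.
\]
The second equality is wrong: $\ad_u^*=\ad_u^t$, so $-\tfrac12\tr(\ad_u\ad_u^*)=-\tfrac12\tr(\ad_u\ad_u^t)\le0$. In fact these two terms are exact negatives of one another and \emph{cancel}: using $\ad_u^*=-\ad_u$ one has $\tr(\ad_u\ad_v^*)=-\tr(\ad_u^*\ad_v^*)=-\tr(\ad_v\ad_u)=-\tr(\ad_u\ad_v)$, so for any $v$ (in particular $v=u$) the first two trace terms in \eqref{ricci1} sum to zero. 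Thus for $u\in K^+(\prs)$ one gets exactly $\ric(u,u)=-\tfrac14\tr(J_u^2)\ge0$, with equality iff $J_u=0$ iff $u\in[\G,\G]^\perp$, hence $u=0$. This is the paper's argument. Your conclusion ``equality forces $\ad_u=0$'' is not justified (and not needed): the $\ad$-terms contribute nothing, so no information about $\ad_u$ is extracted from $\ric(u,u)=0$.

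You already use the correct cancellation argument when handling $K^-(\prs)$ (``the first two traces cancel''), so the $K^+$ case should be treated the same way. Likewise, your $O^-$ computation is correct and is equivalent to the paper's $\ric(u,u)=-\tfrac14\tr((\ad_u+\ad_u^*)^2)$.
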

 \begin{proof} First remark that, for any $u\in\G$, $J_u$ is skew-symmetric and $J_u=0$ iff $u\in[\G,\G]^\perp$. With this remark in mind,
 by using \eqref{ricci1}, we get  for any $u\in K^+(\prs)$, $\ric(u,u)=-\frac14\tr(J_u^2)\geq0$ and $\ric(u,u)=0$ if and only if $J_u=0$. This shows that the restriction of $\ric$ to $K^+(\prs)$ is definite positive. On the other hand, for any $u\in O^-$, by using \eqref{ricci1} we get $\ric(u,u)=-\frac14\tr((\ad_u+\ad_u^*)^2)\leq0$ and $\ric(u,u)=0$ iff $u\in K(\prs)$. 
 This shows that the restriction of $\ric$ to $O^-$ is negative definite. We have also, for any $u\in K^-(\prs)$ and any $v\in\G$, $\ric(u,v)=0$. Finally, for any $u\in K^+(\prs)$ and any $v\in O^-$, $\ric(u,v)=0$ this completes the proof of $(i)-(ii)$.
 
  In any characteristic basis $\B$ of $\G$, according to the results shown in $(i)-(ii)$, the matrix $\mathrm{R}(\ric,\B)$ has the desired form. Put \[ Q =  \left[
                \begin{array}{cccc}
                  I_{n_{1}} & -Z^{-1}V & 0 & 0 \\
                  0 & I_{n_{2}} & 0 & 0 \\
                  0 & 0 & I_{n_{3}} & 0 \\
                  0 & -Y^{-1}W^{t} & 0 & I_{n_{4}}
                \end{array}
              \right].\]         
 We can check easily that   \[ Q^{t}\;\mathrm{Mat}(\ric,\B)\; Q  = \frac{1}{2}\left[
                              \begin{array}{cccc}
                                Z & 0 & 0 & 0 \\
                                0& \mathrm{R}(\ric,\B) & 0 & 0 \\
                                0 & 0 & 0 & 0 \\
                                0 & 0 & 0 & Y
                              \end{array}
                            \right].\]
 This formula combined with the results in $(i)-(ii)$ give the desired formula for the signature of $\ric$.\qedhere

 \end{proof}
 
 \begin{definition}\label{def} Let $(G,h)$ be a Riemannian Lie group and $(\G,\prs)$ its associated Euclidean Lie algebra.
 \begin{enumerate}\item[$\bullet$] We call $(r^-,r^0,r^+)=(\dim[\G,\G]^\perp-
 \dim K^-(\prs),\dim K^-(\prs),\dim K^+(\prs))$ the Ricci signature underestimate of $(\G,\prs)$.
 \item[$\bullet$] For any characteristic basis $\B$ of $\G$, we call $\mathrm{R}(\ric,\B)$ defined by \eqref{matrix} \emph{ reduced matrix } of the Ricci curvature in $\B$. It is a symmetric $(s\times s)$-matrix with   $s=\dim[\G,\G]-\dim K^+(\prs)$.
 
 \end{enumerate}
 
 \end{definition}

  Note that the order of  $\mathrm{R}(\ric,\B)$  is zero iff $[\G,\G]\subset K(\prs)$. In this case $K(\prs)=[\G,\G]\oplus K^-(\prs)$ and  we get:
 \begin{corollary}\label{co1} Let $(G,h)$ be a Riemannian Lie group such that $[\G,\G]\subset K(\prs)$. Then the signature of the Ricci curvature of $h$ is given by
 $$(s^-,s^0,s^+)=(\dim\G-\dim K(\prs),\dim K(\prs)-\dim[\G,\G],\dim [\G,\G]).$$
 \end{corollary}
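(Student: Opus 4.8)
The plan is to apply Lemma \ref{theo1} directly, since the hypothesis $[\G,\G]\subset K(\prs)$ forces the reduced matrix $\mathrm{R}(\ric,\B)$ to be the empty matrix, and then simply bookkeep the dimensions appearing in \eqref{eqimportant}. First I would observe that $[\G,\G]\subset K(\prs)$ means $K^+(\prs)=K(\prs)\cap[\G,\G]=[\G,\G]$, so that $O^+=\{0\}$ and hence $n_2=\dim O^+=0$. Consequently the order $s=\dim[\G,\G]-\dim K^+(\prs)$ of the reduced matrix is zero, so the triple $(m^-,m^0,m^+)$ in Lemma \ref{theo1}$(iii)$ is $(0,0,0)$.

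Next I would substitute these values into \eqref{eqimportant}. We get
$$
(s^-,s^0,s^+)=\bigl(\dim[\G,\G]^\perp-\dim K^-(\prs),\;\dim K^-(\prs),\;\dim K^+(\prs)\bigr),
$$
which is exactly the Ricci signature underestimate of Definition \ref{def}. It then remains to rewrite each entry in the claimed form. Since $K^+(\prs)=[\G,\G]$ we have $s^+=\dim[\G,\G]$. For the other two entries I would use that, under the hypothesis, $K(\prs)=K^+(\prs)\oplus K^-(\prs)=[\G,\G]\oplus K^-(\prs)$ (the sum being orthogonal and direct because $K^+(\prs)\subset[\G,\G]$ and $K^-(\prs)\subset[\G,\G]^\perp$), whence $\dim K^-(\prs)=\dim K(\prs)-\dim[\G,\G]$; this gives $s^0=\dim K(\prs)-\dim[\G,\G]$. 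Finally, $\dim[\G,\G]^\perp=\dim\G-\dim[\G,\G]$, so
$$
s^-=\dim[\G,\G]^\perp-\dim K^-(\prs)=\bigl(\dim\G-\dim[\G,\G]\bigr)-\bigl(\dim K(\prs)-\dim[\G,\G]\bigr)=\dim\G-\dim K(\prs),
$$
which completes the identification.

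There is essentially no obstacle here: the corollary is a pure specialization of Lemma \ref{theo1} to the degenerate case where the reduced matrix has size zero, and the only thing to be careful about is checking that the decomposition $K(\prs)=[\G,\G]\oplus K^-(\prs)$ is genuinely a direct orthogonal sum so that the dimension count is valid — but this is immediate from the definitions of $K^\pm(\prs)$ and the orthogonality of $[\G,\G]$ and $[\G,\G]^\perp$. I would present the argument in one short paragraph, emphasizing the vanishing of $O^+$ and hence of the reduced matrix.
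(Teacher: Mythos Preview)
Your proposal is correct and follows exactly the approach the paper takes: the paper simply notes, just before the corollary, that the order of $\mathrm{R}(\ric,\B)$ is zero iff $[\G,\G]\subset K(\prs)$, in which case $K(\prs)=[\G,\G]\oplus K^-(\prs)$, and then reads off the signature from Lemma~\ref{theo1}. One small remark: your justification that $K(\prs)=[\G,\G]\oplus K^-(\prs)$ appeals only to orthogonality, which shows the sum is direct but not that it exhausts $K(\prs)$; the missing (easy) point is that for $u\in K(\prs)$ with $u=u_1+u_2$, $u_1\in[\G,\G]$, $u_2\in[\G,\G]^\perp$, the hypothesis $[\G,\G]\subset K(\prs)$ gives $u_1\in K(\prs)$ and hence $u_2=u-u_1\in K^-(\prs)$.
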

 
 \begin{remark} The case where the Riemannian metric is bi-invariant $($$\G=K(\prs)$$)$ is a particular case of the situation in Corollary \ref{co1} and in this case $Z(\G)=[\G,\G]^\perp$ and hence the signature is given by
 \[ (s^-,s^0,s^+)=(0,\dim Z(\G),\dim [\G,\G]). \]

 \end{remark}
 
 \section{ Ricci signature underestimates  in nilpotent Riemannian Lie groups and a proof of Theorems \ref{main} and \ref{nice}}\label{section3}
 
 In this section, we will show that Lemma \ref{theo1} turn out to be very useful in the case of nilpotent Riemannian Lie groups that permits us to prove Theorems \ref{main} and \ref{nice}.
 
 \subsection{Preliminaries}
 
 Let $(G,h)$ be  a nilpotent Riemannian Lie group. The  formula \eqref{ricci1} becomes in this case  quite simple
 \begin{equation*}
 \ric(u,v)=-\frac12\tr(\ad_u\circ\ad_v^*)-\frac14\tr(J_u\circ J_v)
 =-\frac12\langle\ad_u,\ad_v\rangle_1+\frac14\langle J_u,J_v\rangle_1,
 \end{equation*}where $\prs_1$ is the Euclidean product on $\mathrm{End}(\G)$ associated to $\prs$. In particular, if $(e_1,\ldots,e_n)$ is an orthonormal 
 basis of $\G$ then
 \begin{equation}\label{riccinilpotent}
 \ric(u,v)=-\frac12\sum_{i,j}\langle[u,e_i],e_j\rangle
 \langle[v,e_i],e_j\rangle+\frac12\sum_{i<j}\langle[e_i,e_j],u\rangle
 \langle[e_i,e_j],v\rangle.\end{equation}
 Moreover, since a skew-symmetric nilpotent endomorphism must vanishes then  $K(\prs)=Z(\G)$. This simple fact combined with the result of Lemma \ref{theo1} will have surprising consequences. Note first that, as a particular case of Corollary \ref{co1}, we get the following result which first appeared in \cite{bouc} and which solves Problem 1 for 2-step nilpotent Lie groups.
 \begin{corollary}\label{co2} Let $G$ be a 2-step nilpotent Lie group. Then, for any left-invariant Riemannian metric on $G$, the signature of its Ricci curvature is given by
 $$(s^-,s^0,s^+)=(\dim\G-\dim Z(\G),\dim Z(\G)-\dim[\G,\G],\dim [\G,\G]).$$
 \end{corollary}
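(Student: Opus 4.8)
The plan is to obtain Corollary \ref{co2} as the specialization of Corollary \ref{co1} to the case where the reduced Ricci matrix has order zero. Concretely, I would check that a $2$-step nilpotent Euclidean Lie algebra $(\G,\prs)$ satisfies the hypothesis $[\G,\G]\subset K(\prs)$ of Corollary \ref{co1}, and that moreover $K(\prs)=Z(\G)$, so that the dimensions occurring in Corollary \ref{co1} coincide with those in the statement.

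First I would use that $\G$ being $2$-step nilpotent means $[[\G,\G],\G]=0$, i.e. $[\G,\G]\subset Z(\G)$. Then I would invoke the elementary fact, already recorded in the preliminaries, that for a nilpotent Lie algebra $K(\prs)=Z(\G)$: if $u\in K(\prs)$ then $\ad_u=-\ad_u^*$ is skew-symmetric with respect to $\prs$ and, by nilpotency of $\G$, a nilpotent endomorphism; since $\tr(\ad_u\ad_u^*)=-\tr(\ad_u^2)=0$ while $\tr(\ad_u\ad_u^*)\geq0$ with equality only when $\ad_u=0$, we get $u\in Z(\G)$, and the reverse inclusion $Z(\G)\subset K(\prs)$ is obvious.

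Combining the two, $[\G,\G]\subset Z(\G)=K(\prs)$, so Corollary \ref{co1} applies and gives
$$(s^-,s^0,s^+)=\left(\dim\G-\dim K(\prs),\;\dim K(\prs)-\dim[\G,\G],\;\dim[\G,\G]\right);$$
replacing $K(\prs)$ by $Z(\G)$ yields exactly the asserted signature, with $s^-=\dim\G-\dim Z(\G)$, $s^0=\dim Z(\G)-\dim[\G,\G]$ and $s^+=\dim[\G,\G]$. There is essentially no obstacle here: all the substance sits in Lemma \ref{theo1} (via Corollary \ref{co1}) and in the vanishing of skew-symmetric nilpotent operators; the corollary is merely the observation that for a $2$-step nilpotent metric Lie algebra one has $K^+(\prs)=Z(\G)\cap[\G,\G]=[\G,\G]$, so the reduced matrix $\mathrm{R}(\ric,\B)$ of Definition \ref{def} is the empty matrix and $(m^-,m^0,m^+)=(0,0,0)$ in \eqref{eqimportant}.
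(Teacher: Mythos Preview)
Your proposal is correct and follows essentially the same approach as the paper: the paper obtains Corollary \ref{co2} as the particular case of Corollary \ref{co1} after noting that $K(\prs)=Z(\G)$ for any nilpotent Euclidean Lie algebra, and that $[\G,\G]\subset Z(\G)$ in the $2$-step case. Your write-up even supplies a bit more detail on why a skew-symmetric nilpotent operator vanishes than the paper does.
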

 \subsection{Proof  of Theorem \ref{main} and Corollary \ref{coo}}
 
 \begin{proof}
 Let $(G,h)$ be a nilpotent Riemannian Lie group. We distinguish two cases.
 \begin{enumerate}\item[$\bullet$] $Z(\G)\subset[\G,\G]$. In this case, it is obvious that the Ricci signature underestimate  of $(\G,\prs)$ is given by
 \[ (r^-,r^0,r^+)=(\dim\G-\dim[\G,\G],0,\dim Z(\G)). \]
 On the other hand, by using \eqref{sign}, one can see easily that
 \[ \mathbf{Sign}(\G)=\left\{(r^-+m^-,r^0+m^0,r^++m^+),\; m^-+m^0+m^+=\dim[\G,\G]-\dim Z(\G)  \right\}. \]
 According to Lemma \ref{theo1}, the Ricci signature of $h$ belongs to $\mathrm{Sign}(\G)$ and we obtain the result in this case.  Corollary \ref{coo} follows from the fact that $r^-=\dim\G-\dim[\G,\G]\geq2$. In a nilpotent Lie algebra the derived ideal is always of codimension greater than 2.
 \item[$\bullet$]  $Z(\G)\nsubset[\G,\G]$. Choose a complement $I$ of $Z(\G)\cap[\G,\G]$ in $Z(\G)$ and a complement $U$ of $[\G,\G]\oplus I$ in $\G$. Thus $\G=\G_1\oplus I$ where $\G_1=[\G,\G]\oplus U$ is an ideal of $\G$ and $I$ is a central ideal. Moreover, $Z(\G_1)=Z(\G)\cap[\G,\G]$ and $[\G,\G]=[\G_1,\G_1]$. By using the same notations as in \eqref{sign}, we get that  the Ricci signature underestimate  of $(\G,\prs)$ is given by
 \[ (r^-,r^0,r^+)=(n-d-p,p,\ell),\quad p= \dim (Z(\G)\cap[\G,\G]^\perp).\]
 We have obviously $ p\leq\dim I=\dim Z(\G)-\dim(Z(\G)\cap[\G,\G])$ and
 \[ p=\dim Z(\G)+\dim\G-\dim[\G,\G]-\dim(Z(\G)+[\G,\G]^\perp)\geq \dim Z(\G) -\dim[\G,\G].  \]According to Lemma \ref{theo1}, the Ricci signature of $h$ belongs to $\mathbf{Sign}(\G)$ and we obtain the result in this case. Corollary \ref{coo} follows from the fact that 
 \begin{eqnarray*}r^-&=&\dim\G-\dim[\G,\G]-\dim (Z(\G)\cap[\G,\G]^\perp)\\
 &=&
 \dim\G_1-\dim[\G_1,\G_1]+\dim I-\dim (Z(\G)\cap[\G,\G]^\perp)\\
 &=&\dim\G_1-\dim[\G_1,\G_1]+\dim Z(\G)-\dim (Z(\G)\cap[\G,\G])-\dim (Z(\G)\cap[\G,\G]^\perp)\\&\geq&\dim\G_1-\dim[\G_1,\G_1] \geq2.\end{eqnarray*}
 
 \end{enumerate}

 \end{proof}

 \subsection{Proof of Theorem \ref{nice}}
 
 \begin{proof} We have obviously $\mathbf{Sign}(\G)=\{(n-d+m^-,m^0,d-1+m^+):\; m^-+m^0+m^+=1\}$, where $d=\dim[\G,\G]$ and $n=\dim\G$.
  Note first that we can choose a nice basis
 $\B=(X_i)_{i=1}^n$ where $Z(\G)=
 \mathrm{span}\{X_i \}_{i=1}^{d-1}$ and $[\G,\G]=
 \mathrm{span}\{X_i \}_{i=1}^{d}$. Indeed, suppose that $\B=(X_i)_{i=1}^n$ with $[\G,\G]=
 \mathrm{span}\{X_i \}_{i=1}^d$. Let $z=\sum_{i=1}^da_iX_i\in Z(\G)$.  Suppose that there exists $a_i\not=0$ and $X_i\notin Z(\G)$. Then there exists $\ell\in\{1,\ldots,n\}$ such that $[X_\ell,X_i]\not=0$. So we get
 $\sum_{j=1}^da_j[X_\ell,X_j]=0.$
 From the properties of a nice basis we deduce that $\{[X_\ell,X_j],j=1,\ldots,d,\;[X_\ell,X_j]\not=0 \}$ is a linearly independent family and hence $a_i=0$. This shows that $\{X_i,\;X_i\in Z(\G)\}$ is basis of $Z(\G)$.

 We consider the Euclidean product $\prs$ on $\G$ for which $\B$ is orthogonal and $a_i=\langle X_i,X_i\rangle$. It is obvious that $\B$ is a characteristic basis of $(\G,\prs)$ and it is also nice of $\G$ so $\mathrm{M}(\ric,\B)$ is diagonal and hence $\mathrm{R}(\ric,\B)$ is also diagonal. According to Lemma \ref{theo1}, the reduced matrix has order 1 and is given by
 $\mathrm{R}(\ric,\B)=(2\ric(X_d,X_d))$. Moreover, the Ricci signature of $(\G,\prs)$ is given by $(n-d+m^-,m^0,d-1+m^+)$ where $(m^-,m^0,m^+)$ is the signature of $\mathrm{R}(\ric,\B)$. To complete the proof, we will show that we can choose suitable $a_i$ so that $\ric(X_d,X_d)$ can be either zero, positive or negative.

 Denote by $C_{ij}^k$ the structure constants of the Lie bracket in $\B$. The basis $(e_i)_{i=1}^n=(\frac1{\sqrt{a_i}}X_i)_{i=1}^n$ is an orthonormal basis of $\G$ and from \eqref{riccinilpotent}
 \begin{eqnarray} 2\ric(X_d,X_d)
  &=& \sum_{i<j}\frac{(C_{ij}^d)^2 a_d^2 }{a_ia_j}-\sum_{i,j}
  \frac{(C_{id}^{j})^2 a_{j} }{a_i}\label{eqricci}.\end{eqnarray}
   Note that for any $(i,j)$, such that $[X_i,X_j]=C_{ij}^dX_d$ with $C_{ij}^d\not=0$,
  $i\not=d$ and $j\not=d$. Indeed, if $i=d$, we have $[X_d,X_j]=C_{d j}^d X_d$ and hence $X_d$ is an eigenvector of $\ad_{X_j}$ with the real non zero eigenvalue $-C_{d i}^d$ which is impossible since $\ad_{X_j}$ is nilpotent. We have also that if $[X_d,X_i]=C_{di}^jX_j$ with $C_{di}^j\not=0$ then $i\not=d$ and $j\not=d$.
  So
  \[ \ric(X_d,X_d)=\al a_d^2-\be. \]Now since $X_d\in[\G,\G]\setminus Z(\G)$,
   $\al>0$, $\be>0$ and both $\al$ and $\be$ depend only on $a_i$ with $i\not=d$. So we can choose $a_d$ such that $\ric(X_d,X_d)=0,$ $>0$ or $<0$. This completes the proof.
  \end{proof}  
  
  One can ask naturally if this theorem is still true when $\dim[\G,\G]-\dim Z(\G)\geq2$. By looking to the proof given here, one can conjecture that the answer is true, it suffices to solve some  systems of polynomial equations. This can be very difficult. To be precise, we will point out the difficulty one can face when trying to generalize Theorem \ref{nice} when $\dim[\G,\G]-\dim Z(\G)\geq2$. We will also give a method to overcome this difficulty. Although, we have not succeeded yet to show that this method works in the general case, we will use it successfully in the proof of Theorem \ref{main0}.
  
  \subsection{Inverse function theorem trick} Suppose that $\G$ is a nilpotent Lie algebra having a nice basis $\B$ and satisfying $Z(\G)\subset[\G,\G]$. Write
   $\B=(X_i)_{i=1}^n$ where $(X_i)_{i=1}^\ell$ is a basis of $Z(\G)$ and $(X_i)_{i=1}^d$ is a basis of $[\G,\G]$. We have obviously
    $$\mathbf{Sign}(\G)=\left\{(n-d+m^-,m^0,\ell+m^+):\; m^-+m^0+m^+=d-\ell\right\}.$$
   
   We consider the Euclidean product $\prs$ on $\G$ for which $\B$ is orthogonal and $a_i=\langle X_i,X_i\rangle$. It is clear that $\B$ is a characteristic basis of $(\G,\prs)$ and it is also nice so $\mathrm{M}(\ric,\B)$ is diagonal. According to Lemma \ref{theo1}, the reduced matrix has order $d-\ell$ and is diagonal and given by
   $$\mathrm{R}(\ric,\B)=\mathrm{diag}(2\ric(X_{\ell+1},X_{\ell+1}),\ldots,2\ric(X_d,X_d)).$$ Moreover, the signature is given by $(n-d+m^-,m^0,\ell+m^+)$ where $(m^-,m^0,m^+)$ is the signature of $\mathrm{R}(\ric,\B)$. 
  According to \eqref{eqricci}, for any $i=\ell+1,\ldots,d$, we can write in a unique way
   $$2\ric(X_i,X_i)=\frac{F_{i-\ell}(a_1,\ldots,a_n)}{a_{n_1}\ldots a_{n_i}},$$ where $F_{i-\ell}$ is a homogeneous polynomial on $(a_1,\ldots,a_n)$. So to  generalize Theorem \ref{nice} when $d-\ell\geq2$, it suffices to find suitable values of $(a_1,\ldots,a_n)$ such that $(F_i(a_1,\ldots,a_n))_{i=1}^{d-\ell}$ have all the  possible signs. It is very difficult in the general case. We give now a situation where we can conclude.

    Suppose that there exists $(\al_1,\ldots,\al_n)$ such that $F_j(\al_1,\ldots,\al_n)=0$ for $j=1,\ldots,d-\ell$ and define $$F:
   \{(x_1,\ldots,x_{d-\ell})\in\R^{d-\ell}, x_i>0 \}\too \R^{d-\ell}$$ by
   $$F(x_1,\ldots,x_{d-\ell})=(F_1(\al_1,\ldots,\al_\ell,x_1,\ldots,x_{d-\ell},\al_{d+1},\ldots,\al_n),
   \ldots,F_{d-\ell}(\al_1,\ldots,\al_\ell,x_1,\ldots,x_{d-\ell},\al_{d+1},\ldots,\al_n)).$$
   We have $F(\al_{\ell+1},\ldots,\al_d)=0$ and if the differential $DF(\al_{\ell+1},\ldots,\al_d)$ is invertible we can apply the inverse function theorem  and hence $F$ realizes a diffeomorphism from an open set centred in $(\al_{\ell+1},\ldots,\al_d)$ into an open ball centred in $(0,,\ldots,0)$.
    So, for a suitable choice of $a_i$, $\mathrm{R}(\ric,\B)$ can have all the possible signatures. 
    
    So far  we have shown that Theorem \ref{nice} is  true when $\dim[\G,\G]-\dim Z(\G)\geq2$ if there exists $(\al_1,\ldots,\al_n)$ with $\al_1>0,\ldots,\al_n>0$ satisfying $F_j(\al_1,\ldots,\al_n)=0$ for $j=1,\ldots,d-\ell$ and $\det DF(\al_{\ell+1},\ldots,\al_d)\not=0$. 
   
   \begin{definition} \label{nicedef}We call nice a nilpotent Lie algebra $\G$ with $Z(\G)\subset [\G,\G]$ and having a nice basis for which there exists $(\al_1,\ldots,\al_n)$ with $\al_1>0,\ldots,\al_n>0$ satisfying $F_j(\al_1,\ldots,\al_n)=0$ for $j=1,\ldots,d-\ell$ and $\det DF(\al_{\ell+1},\ldots,\al_d)\not=0$.
   
   \end{definition}
   
   So, according to our study above, we have the following result.
   
   \begin{theorem}\label{nice2} Let $G$ be a nilpotent Lie group such that its Lie algebra $\G$ is nice. Then for any $(s^-,s^0,s^+)\in\mathrm{Sign}(\G)$ there exists a left invariant Riemannian metric on $G$ such that its Ricci signature is $(s^-,s^0,s^+)$.

   \end{theorem}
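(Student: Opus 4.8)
The plan is to convert the hypothesis that $\G$ is nice directly into the realization statement, following the inverse function theorem trick described just above. First I would fix a nice basis $\B=(X_i)_{i=1}^n$ of $\G$ with $(X_i)_{i=1}^\ell$ a basis of $Z(\G)$ and $(X_i)_{i=1}^d$ a basis of $[\G,\G]$, as provided by Definition \ref{nicedef}; recall that, since $Z(\G)\subset[\G,\G]$,
$$\mathbf{Sign}(\G)=\left\{(n-d+m^-,m^0,\ell+m^+):\; m^-+m^0+m^+=d-\ell\right\}.$$
For positive reals $a_1,\ldots,a_n$, let $\prs$ be the inner product making $\B$ orthogonal with $\langle X_i,X_i\rangle=a_i$. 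Because $\G$ is nilpotent, $K(\prs)=Z(\G)$, hence $K^+(\prs)=Z(\G)=\spa\{X_1,\ldots,X_\ell\}$ and $K^-(\prs)=Z(\G)\cap[\G,\G]^\perp=\{0\}$, so $\B$ is a characteristic basis; and since $\B$ is nice, $\mathrm{Mat}(\ric,\B)$ is diagonal. By Lemma \ref{theo1} the reduced matrix is then the diagonal matrix $\mathrm{R}(\ric,\B)=\mathrm{diag}(2\ric(X_{\ell+1},X_{\ell+1}),\ldots,2\ric(X_d,X_d))$, of order $d-\ell$, and the Ricci signature of $(\G,\prs)$ equals $(n-d+m^-,m^0,\ell+m^+)$ where $(m^-,m^0,m^+)$ is the signature of $\mathrm{R}(\ric,\B)$.

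The next step is to control the signs of these diagonal entries. Using \eqref{eqricci} I would write, for $i=\ell+1,\ldots,d$, $2\ric(X_i,X_i)=F_{i-\ell}(a_1,\ldots,a_n)/(a_{n_1}\cdots a_{n_i})$ with $F_{i-\ell}$ a polynomial and a strictly positive denominator, so that the sign of $2\ric(X_i,X_i)$ is the sign of $F_{i-\ell}(a_1,\ldots,a_n)$. By the definition of a nice Lie algebra there is $(\al_1,\ldots,\al_n)$ with all $\al_i>0$, $F_j(\al_1,\ldots,\al_n)=0$ for $j=1,\ldots,d-\ell$, and $\det DF(\al_{\ell+1},\ldots,\al_d)\neq0$, where $F$ is the map obtained by freezing every $a$-variable except those in slots $\ell+1,\ldots,d$. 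The inverse function theorem then yields an open set $\Ua\ni(\al_{\ell+1},\ldots,\al_d)$, which after shrinking we may assume lies in the positive orthant, and an open set $\Ni\ni0$ in $\R^{d-\ell}$, with $F\colon\Ua\to\Ni$ a diffeomorphism.

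Finally, given a target triple $(m^-,m^0,m^+)$ with $m^-+m^0+m^+=d-\ell$, I would choose $\e>0$ small enough that the point $y\in\R^{d-\ell}$ having $m^+$ coordinates equal to $\e$, $m^-$ coordinates equal to $-\e$, and $m^0$ coordinates equal to $0$ lies in $\Ni$, then set $(a_{\ell+1},\ldots,a_d)=F^{-1}(y)\in\Ua$ and $a_i=\al_i$ for the remaining indices; all $a_i$ are positive, and by construction $2\ric(X_i,X_i)$ acquires the prescribed sign for each $i$, so $\mathrm{R}(\ric,\B)$ has signature $(m^-,m^0,m^+)$ and the associated left invariant metric has Ricci signature $(n-d+m^-,m^0,\ell+m^+)$. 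Letting the triple range over all of $\{m^-+m^0+m^+=d-\ell\}$ then exhausts $\mathbf{Sign}(\G)$. I do not expect a genuine obstacle in this argument: the substantive difficulty — producing the common positive zero $(\al_1,\ldots,\al_n)$ with nondegenerate differential — has been absorbed into the hypothesis, and what remains is the routine verification that $K(\prs)=Z(\G)$ makes $\B$ characteristic with $K^-(\prs)=\{0\}$, together with correct bookkeeping of the signature shift coming from Lemma \ref{theo1}.
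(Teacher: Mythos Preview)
Your proposal is correct and follows essentially the same approach as the paper: the paper's proof of Theorem \ref{nice2} is precisely the ``inverse function theorem trick'' described immediately before the statement, and your argument reproduces that trick step by step (characteristic nice basis, diagonal reduced matrix, writing $2\ric(X_i,X_i)$ as $F_{i-\ell}$ over a positive denominator, then applying the inverse function theorem at the point $(\al_{\ell+1},\ldots,\al_d)$). Your write-up is slightly more explicit in places---e.g.\ spelling out why $K^-(\prs)=\{0\}$ and how to pick the target point $y$ in the image---but there is no difference in strategy.
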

   
   \begin{remark} It is seems reasonable to conjecture that any nilpotent Lie algebra $\G$ with $Z(\G)\subset [\G,\G]$ and having a nice basis is actually nice.
   
   \end{remark}
   
   We give now two examples of nice nilpotent Lie algebras.
   
   \begin{example}\label{exem1}\begin{enumerate}\item We consider the 7-dimensional nilpotent Lie algebra labelled $(12457L1)$ in \cite{ming}  given by
   \[ [e_1,e_2]=e_3,\;[e_1,e_3]=e_4,\;[e_1,e_4]=-e_6,\;[e_1,e_6]=e_7,\;[e_2,e_3]=e_5,\;[e_2,e_5]=-e_6,\;[e_3,e_5]=-e_7. \]
   We have $Z(\G)=\{e_7\}\subset[\G,\G]=\{e_3,e_4,e_5,e_6,e_7 \}$ and $\B=(e_7,e_3,e_4,e_5,e_6,e_1,e_2)$ is a nice basis. Let compute $2\ric(e_i,e_i)$ for $i=3,\ldots,6$ for the metric for which $\B$ is orthogonal with $\langle e_i,e_i\rangle=a_i$. By applying \eqref{eqricci}, we get
   \begin{eqnarray*}
   2\ric(e_3,e_3)&=&\frac{a_3^2}{a_1a_2}-\frac{a_4}{a_1}-\frac{a_5}{a_2}-\frac{a_7}{a_5}=\frac{a_3^2a_5-a_2a_4a_5-a_1a_5^2-a_1a_2a_7}{a_1a_2a_5}=\frac{F_1(a_1,\ldots,a_7)}{a_1a_2a_5},\\
   2\ric(e_4,e_4)&=&\frac{a_4^2}{a_1a_3}-\frac{a_6}{a_1}=
   \frac{a_4^2-a_3a_6}{a_1a_3}=\frac{F_2(a_1,\ldots,a_7)}{a_1a_3},\\
   2\ric(e_5,e_5)&=&\frac{a_5^2}{a_2a_3}-\frac{a_6}{a_2}-\frac{a_7}{a_3}
   =\frac{a_5^2-a_3a_6-a_2a_7}{a_2a_3}=\frac{F_3(a_1,\ldots,a_7)}{a_2a_3},\\
   2\ric(e_6,e_6)&=&\frac{a_6^2}{a_2a_5}+\frac{a_6^2}{a_1a_4}-\frac{a_7}{a_1}
   =\frac{(a_1a_4+a_2a_5)a_6^2-a_2a_4a_5a_7}{a_1a_2a_4a_5}=\frac{F_4(a_1,\ldots,a_7)}{a_1a_2a_4a_5}.
   \end{eqnarray*}
    The sequence $\left(\frac{7}{240},\frac{1127}{1200},1,1,\frac{7}{5},1,\frac{1152}{1127}\right)$ is a solution of the equations
   $F_i(\al_1,\ldots,\al_7)=0$ for $i=1,\ldots,4$ and satisfies $\det DF(\al_3,\al_4,\al_5,\al_6)\not=0$ and hence this Lie algebra is nice. 
   
   \item We consider the $\N$-graded filiform $n$-dimensional Lie algebra $\mathfrak{m}_0(n)=\mathrm{span}\{X_1,\ldots,X_n\}$ with the non vanishing Lie brackets
   $[X_1,X_i]=X_{i+1}$, $i=2,\ldots,n-1$. We have
   $$\mathbf{Sign}(\mathfrak{m}_0(n))=\left\{ (2+m^-,m^0,1+m^+),\;m^-+m^0+m^+=n-3  \right\}.$$
   Let $\prs$ be  the Euclidean inner product on $\mathfrak{m}_0(n)$ for which $(X_i)_{i=1}^n$ is an orthogonal basis with $\langle X_i,X_i\rangle=a_i$. The basis $\B=(X_n,X_3,\ldots,X_{n-1},X_1,X_2)$ is a characteristic basis of $\prs$ and $\mathrm{R}(\ric,\B)=\mathrm{diag}(2\ric(X_i,X_i))_{i=3}^{n-1}$. 
   By using \eqref{eqricci}, we get for any $i=3,\ldots,n-1$
   $$2\ric(X_i,X_i)=\frac{a_i^2}{a_1a_{i-1}}-\frac{a_{i+1}}{a_1}=\frac{a_i^2-a_{i-1}a_{i+1}}{a_1a_{i-1}}
   =\frac{F_{i-2}(a_1,\ldots,a_n)}{a_1a_{i-1}}.$$
   It is obvious that $F_i(1,\ldots,1)=0$ and $\det DF(1,\ldots,1)\not=0$ and hence 
   $\mathfrak{m}_0(n)$ is nice.
   
   \end{enumerate}
   
   \end{example}

\section{Proof of Theorem \ref{main0}}\label{section4}

\begin{proof} The proof goes as follows. There are, up to an isomorphism, 44 non abelian nilpotent Lie algebras of dimension less or equal to 6: 1 of dimension 3, 2 of dimension 4, 8 of dimension 5 and 33 of dimension 6 (see Tables 1 and 2). Among these Lie algebras, 12 are 2-step nilpotent and we can apply Corollary \ref{co2}, 10 satisfy the hypothesis of Theorem \ref{nice} and 15 are nice in the sense of Definition \ref{nicedef} and we can apply Theorem \ref{nice2}. At the end, we are left with 7 Lie algebras needing each of them a special treatment.

The Lie algebras $L_{3,2}$, $L_{4,2}$, $L_{5,2}$, $L_{5,4}$, $L_{5,8}$, $L_{6,2}$, $L_{6,4}$ $L_{6,8}$, $L_{6,22}(\e)$, $L_{6,26}$ are obviously 2-step nilpotent and we can apply Corollary \ref{co2}.

The Lie algebras $L_{4,3}$, $L_{5,5}$, $L_{5,9}$, $L_{6,10}$, $L_{6,19}(0)$, $L_{6,23}$, $L_{6,24}(\e)$ and $L_{6,25}$ satisfy clearly the hypothesis of Theorem \ref{nice}.

We will show now that  the Lie algebras $L_{5,6}$, $L_{5,7}$, $L_{6,12}$, $L_{6,13}$, $L_{6,14}$, $L_{6,15}$, $L_{6,16}$, $L_{6,17}$, $L_{6,18}$, $L_{6,19}(\e\not=0)$, $L_{6,20}$, $L_{6,21}(0)$ and $L_{6,21}(\e\not=0)$ are nice in the sense of Definition \ref{nicedef} so that we can apply Theorem \ref{nice}.
 Since the computations are straightforward, we will give for any Lie algebra among these Lie algebras, a nice basis $\B$, the reduced matrix in $\B$ associated to the Euclidean product for which $\B$ is diagonal with $a_i=\langle e_i,e_i\rangle$ and the value $(\al_1,\ldots,\al_n)$ appearing in Definition \ref{nicedef}. Note that $\B_0=(e_1,\ldots,e_n)$ is the basis appearing in Tables 1 and 2.

{\footnotesize
\begin{eqnarray*}
L_{5,6}&:&\left[ (e_5,e_3,e_4,e_1,e_2),\mathrm{diag}\left(\frac{a_3^2-a_2a_4-a_1a_5}{a_1a_2},
\frac{a_4^2-a_3a_5}{a_1a_3}\right),\left(\frac12,\frac12,1,1,1\right)  \right].\\
L_{5,7}&:&\left[ (e_5,e_3,e_4,e_1,e_2),\mathrm{diag}\left(\frac{a_3^2-a_2a_4}{a_1a_2},
\frac{a_4^2-a_3a_5}{a_1a_3}\right),\left(1,1,1,1,1\right)  \right].\\
L_{6,12}&:&\left[ (e_6,e_3,e_4,e_1,e_2,e_5),\mathrm{diag}\left(\frac{a_3^2-a_2a_4}{a_1a_2},
\frac{a_4^2-a_3a_6}{a_1a_3}\right),\left(1,1,1,1,1,1\right)  \right].\\
L_{6,13}&:&\left[ (e_6,e_3,e_5,e_1,e_2,e_4),\mathrm{diag}\left(\frac{a_4a_3^2-a_2a_4a_5-a_1a_2a_6}{a_1a_2a_4},
\frac{(a_2a_4+a_1a_3)a_5^2-a_2a_3a_4a_6}{a_1a_2a_3a_4}\right),\left(1,2,2,1,1,1\right)  \right].\\
L_{6,14}&:&\left[ (e_6,e_3,e_4,e_5,e_1,e_2),\mathrm{diag}\left(\frac{a_4a_3^2-a_2a_4^2-a_1a_4a_5-a_1a_2a_6}{a_1a_2a_4},
\frac{a_4^2-a_3a_5-a_1a_6}{a_1a_3},\frac{(a_2a_3+a_1a_4)a_5^2-a_1a_3a_4a_6}{a_1a_2a_3a_4}\right),
\left(\frac{27}{200},\frac{3}{40},1,3,5,\frac{800}{27}\right)  \right].\\
L_{6,15}&:&\left[ (e_6,e_3,e_4,e_5,e_1,e_2),\mathrm{diag}\left(\frac{a_3^2-a_2a_4-a_1a_5}{a_1a_2},
\frac{a_2a_4^2-a_2a_3a_5-a_1a_3a_6}{a_1a_2a_3},\frac{(a_2a_3+a_1a_4)a_5^2-a_2a_3a_4a_6}{a_1a_2a_3a_4}\right),
\left(\frac{4}{3},\frac{4}{3},2,1,1,1\right)  \right].\\
L_{6,16}&:&\left[ (e_6,e_3,e_4,e_5,e_1,e_2),\mathrm{diag}\left(\frac{a_4a_3^2-a_2a_4^2-a_1a_2a_6}{a_1a_2a_4},
\frac{a_4^2-a_3a_5-a_1a_6}{a_1a_2a_3},\frac{a_2a_5^2-a_1a_4a_6}{a_1a_2a_4}\right),
\left(\frac{1}{3},\frac{1}{3},\frac{2}{3},1,1,1\right)  \right].\\
L_{6,17}&:&\left[ (e_6,e_3,e_4,e_5,e_1,e_2),\mathrm{diag}\left(\frac{a_3^2-a_2a_4-a_1a_6}{a_1a_2},
\frac{a_4^2-a_3a_5}{a_1a_3},\frac{a_5^2-a_4a_6}{a_1a_4}\right),
\left(\frac{1}{2},\frac{1}{2},1,1,1,1\right)  \right].\\
L_{6,18}&:&\left[ (e_6,e_3,e_4,e_5,e_1,e_2),\mathrm{diag}\left(\frac{a_3^2-a_2a_4}{a_1a_2},
\frac{a_4^2-a_3a_5}{a_1a_3},\frac{a_5^2-a_4a_6}{a_1a_4}\right),
\left(1,1,1,1,1,1\right)  \right].\\
L_{6,19}(\e\not=0)&:&\left[ (e_6,e_4,e_5,e_1,e_2,e_3),\mathrm{diag}\left(\frac{a_4^2-a_1a_6}{a_1a_2},
\frac{a_5^2-a_1a_6}{a_1a_3}\right),
\left(1,1,1,1,1,1\right)  \right].\\
L_{6,20}&:&\left[ (e_6,e_4,e_5,e_1,e_2,e_3),\mathrm{diag}\left(\frac{a_4^2-a_1a_6}{a_1a_2},
\frac{a_5^2-a_3a_6}{a_1a_3}\right),
\left(1,1,1,1,1,1\right)  \right].\\
L_{6,21}(0)&:&\left[ (e_5,e_6,e_3,e_4,e_1,e_2),\mathrm{diag}\left(\frac{a_3^2-a_2a_4-a_1a_5}{a_1a_2},
\frac{a_4^2-a_3a_6}{a_1a_3}\right),
\left(2,\sqrt{2},2,\sqrt{2},1,1\right)  \right].\\
L_{6,21}(\e\not=0)&:&\left[ (e_6,e_3,e_4,e_5,e_1,e_2),\mathrm{diag}\left(\frac{a_3^2-a_1a_4-a_2a_5}{a_1a_2},
\frac{a_4^2-a_3a_6}{a_1a_3},\frac{a_5^2-a_3a_6}{a_2a_3}\right),
\left(\frac{1}{2},\frac{1}{2},1,1,1,1\right)  \right].\\
\end{eqnarray*}
}

To complete the proof, we treat now the seven remaining Lie algebras using a case by case approach.\\

$\bullet$ {\bf The Lie algebra $L_{6,11}$ }. \\

This is the only Lie algebra in the list which has no nice basis. Its center is contained in its derived ideal. 
We have $\G=L_{6,11}=\mathrm{span}\{e_1,\ldots,e_6\}$ with the non vanishing Lie brackets
$$[e_{1},e_{2}]= e_{3},\; [e_{1},e_{3}] = e_{4},\;[e_{1},e_{4}] = e_{6},[e_{2},e_{3}] = e_{6},\;[e_{2},e_{5}] = e_{6} $$ and $\mathbf{Sign}(\G)=\left\{(3+m^-,m^0,1+m^+),m^-+m^0+m^+=2) \right\}$.
We consider the Euclidean inner product $\prs$ on $L_{6,11}$ such that $\B=(e_6,e_3,e_4,e_1,e_2,e_5)$ is orthogonal with $a_i=\langle e_i,e_i\rangle$. It is obvious that $\B$ is an orthogonal characteristic basis and, according to Lemma \ref{theo1}, the signature of $\prs$ is $(3+m^-,m^0,1+m^+)$ where $(m^-,m^0,m^+)$ is the signature of the characteristic matrix $\mathrm{R}(\ric,\B)$. Now a direct computation using \eqref{riccinilpotent} and \eqref{matrix} gives
$$\mathrm{R}(\ric,\B)= \left( \begin{array}{cc}
\frac{a_3^2 - a_2a_4}{a_1a_2} & 0 \\ 
0 & \frac{a_4^2 - a_3a_6}{a_1a_3}
\end{array} \right).$$ 
If we take $a_1=a_2=a_3=a_4=a_5
=a_6=1$ we get $\mathrm{R}(\ric,\B) = 0$ and we can use  the inverse function theorem trick.
 So, for a suitable choice of $a_i$, $\mathrm{R}(\ric,\B)$ can have all the possible signatures which prove the theorem for $L_{6,11}$.\\

$\bullet$ {\bf The Lie algebra  $L_{5,3}$}. \\

We have $\G=L_{5,3}=\mathrm{span}\{e_1,\ldots,e_5\}$ with the non vanishing Lie brackets
$$[e_{1},e_{2}]= e_{3}, [e_{1},e_{3}]= e_{4}, $$
and $\mathbf{Sign}(\G)=\{(2,1,2),(2,2,1),(3,0,2),(3,1,1),(4,0,1)\}.$ In this case, the parameter $p$ in \eqref{sign} has two values $p=0$ or $1$, so to realize the signatures in $\mathbf{Sign}(\G)$, we will consider two types of Euclidean inner products on $L_{5,3}$. The first ones are  those satisfying $\dim(Z(\G)\cap[\G,\G]^\perp)=1$ and the second ones are those satisfying $\dim(Z(\G)\cap[\G,\G]^\perp)=0$.

 For the first type, consider the Euclidean inner product  $\prs$ on $L_{5,3}$  for which $\B=(e_4,e_3,e_5,e_1,e_2)$ is orthogonal with $a_i=\langle e_i,e_i\rangle$. Then
$\B$  is a characteristic basis for $\prs$ and it is also nice. Then according to Lemma \ref{theo1} the Ricci signature of $\prs$ is  $(2+m^-,1+m^0,1+m^+)$ where $(m^-,m^0,m^+)$ is the signature of $\mathrm{R}(\ric,\B)$. Now a direct computation using \eqref{eqricci} gives $\mathrm{R}(\ric,\B)= (2\ric(e_3,e_3))= \left(\frac{a_3^2-a_2a_4}{a_1a_2}\right)$ and, for suitable values of  the $a_i$, the Ricci signatures of $\prs$ are $(2,1,2)$, $(2,2,1)$ or $(3,1,1)$.

 For the second type, we consider the basis
$\B=(f_1,f_2,f_3,f_4,f_5)=(e_4,e_3,e_5+e_3+e_1,e_1,e_2)$. The non vanishing Lie brackets in this basis are
\[ [f_2,f_3]=-f_1,\;[f_2,f_4]=-f_1,\; [f_3,f_4]=-f_1,\;[f_3,f_5]=f_2,\;[f_4,f_5]=f_2. \]Consider the Euclidean inner product $\prs$ on $L_{5,3}$ for which $\B$ is orthogonal and $a_i=\langle f_i,f_i\rangle$. We have chosen $\B$ and $\prs$ such that $Z(\G)\cap[\G,\G]^\perp=\{0\}$.
Then
$\B$  is a characteristic basis for $\prs$. Then according to Lemma \ref{theo1} the Ricci signature of $\prs$ is  $(3+m^-,m^0,1+m^+)$ where $(m^-,m^0,m^+)$ is the signature of $\mathrm{R}(\ric,\B)$. Here the situation is more complicated than the first case because $\B$ is not a nice basis and the computation of $\mathrm{R}(\ric,\B)$, which is by the way a $(1\times1)$-matrix, is complicated according to formula \eqref{matrix}. We don't need to give the general expression of $\mathrm{R}(\ric,\B)$,  its value when $a_1=a_4=a_5=1$ and $a_3=2$ will suffice to our purpose. We get
$$\mathrm{R}(\ric,\B)=\left(\frac{12a_2^4+6a_2^3+9a_2^2-a_2-3}{4(2a_2^2+a_2+2)}\right).$$
 It is clear that we can choose $a_2$ such that the signature of $\prs$ is   $(3,0,2)$ or $(4,0,1)$.
 This completes the proof for $L_{5,3}$.\\
 
 $\bullet$ {\bf The Lie algebra  $L_{6,3}$}. \\
 
 The treatment is similar to $L_{5,3}$ with a slight difference, the parameter $p$ takes 1 or 2.
 We have $\G=L_{6,3}=\mathrm{span}\{e_1,\ldots,e_6\}$ with the non vanishing Lie brackets
 $$[e_{1},e_{2}]= e_{3}, [e_{1},e_{3}]= e_{4}, $$
 and $\mathbf{Sign}(\G)=\{(2,2,2),(2,3,1),(3,1,2),(3,2,1),(4,1,1)\}.$
 
 For the first type, consider the Euclidean inner product  $\prs$ on $L_{6,3}$  for which $\B=(e_4,e_3,e_5,e_6,e_1,e_2)$ is orthogonal with $a_i=\langle e_i,e_i\rangle$ and  $\dim(Z(\G)\cap[\G,\G]^\perp)=2$. Then
 $\B$  is a characteristic basis for $\prs$ and it is also nice. Then according to Lemma \ref{theo1} the Ricci signature of $\prs$ is  $(2+m^-,2+m^0,1+m^+)$ where $(m^-,m^0,m^+)$ is the signature of $\mathrm{R}(\ric,\B)$. Now a direct computation using \eqref{eqricci} gives $\mathrm{R}(\ric,\B)= (2\ric(e_3,e_3))= \left(\frac{a_3^2-a_2a_4}{a_1a_2}\right)$ and, for suitable values of  the $a_i$, the Ricci signatures of $\prs$ are $(2,2,2)$, $(2,3,1)$ or $(3,2,1)$.
 
 For the second type, we consider the basis
 $\B=(f_1,f_2,f_3,f_4,f_5,f_6)=(e_4,e_3,e_5,e_1,e_2,e_6+e_3+e_1)$. The non vanishing Lie brackets in this basis are
 \[ [f_2,f_4]=-f_1,\;[f_2,f_6]=-f_1,\; [f_4,f_5]=f_2,\;[f_4,f_6]=f_1,\;[f_5,f_6]=-f_2. \]Consider the Euclidean inner product $\prs$ on $L_{6,3}$ for which $\B$ is orthogonal and $a_i=\langle f_i,f_i\rangle$. We have chosen $\B$ and $\prs$ such that $\dim(Z(\G)\cap[\G,\G]^\perp)=1$.
 Then
 $\B$  is a characteristic basis for $\prs$. Then according to Lemma \ref{theo1} the Ricci signature of $\prs$ is  $(3+m^-,1+m^0,1+m^+)$ where $(m^-,m^0,m^+)$ is the signature of $\mathrm{R}(\ric,\B)$. Here the situation is more complicated than the first case because $\B$ is not a nice basis and the computation of $\mathrm{R}(\ric,\B)$, which is by the way a $(1\times1)$-matrix, is complicated according to formula \eqref{matrix}. We don't need to give the general expression of $\mathrm{R}(\ric,\B)$,  its value when $a_1=a_3=a_4=a_5=a_6=1$ will suffice to our purpose. We get
 $$\mathrm{R}(\ric,\B)=\left(\frac{-4a_2^5+2a_2^3+3a_2-2}{1-a_2-2a_2^3}\right).$$
  It is clear that we can choose $a_2$ such that the signature of $\prs$ is   $(3,1,2)$ or $(4,1,1)$.
  This completes the proof for $L_{6,3}$.\\

$\bullet$ {\bf The Lie algebra  $L_{6,5}$}. \\

The treatment is similar to  $L_{5,3}$.
 We have $\G=L_{6,5}=\mathrm{span}\{e_1,\ldots,e_6\}$ with the non vanishing Lie brackets
 $$[e_{1},e_{2}]= e_{3}, [e_{1},e_{3}]= e_{5},[e_{2},e_{4}]= e_{5} $$
 and $\mathbf{Sign}(\G)=\{(3,1,2),(3,2,1),(4,0,2),(4,1,1),(5,0,1)\}.$
 
 For the first type, consider the Euclidean inner product  $\prs$ on $L_{6,5}$  for which $\B=(e_5,e_3,e_6,e_1,e_2,e_4)$ is orthogonal with $a_i=\langle e_i,e_i\rangle$ and $\dim(Z(\G)\cap[\G,\G]^\perp)=1$. Then
 $\B$  is a characteristic basis for $\prs$ and it is also nice. Then according to Lemma \ref{theo1} the Ricci signature of $\prs$ is  $(3+m^-,1+m^0,1+m^+)$ where $(m^-,m^0,m^+)$ is the signature of $\mathrm{R}(\ric,\B)$. Now a direct computation using \eqref{eqricci} gives $\mathrm{R}(\ric,\B)= (2\ric(e_3,e_3))= \left(\frac{a_3^2-a_2a_5}{a_1a_2}\right)$ and, for suitable values of  the $a_i$, the Ricci signatures of $\prs$ are $(3,1,2)$, $(3,2,1)$ or $(4,1,1)$.
 
 For the second type, we consider the basis
 $\B=(f_1,f_2,f_3,f_4,f_5,f_6)=(e_5,e_3,e_1,e_2,e_4,e_6+e_3+e_1)$. The non vanishing Lie brackets in this basis are
 \[ [f_2,f_3]=-f_1,\;[f_2,f_6]=-f_1,\; [f_3,f_4]=f_2,\;[f_3,f_6]=f_1,\;[f_4,f_5]=f_1,\;[f_4,f_6]=-f_2. \]Consider the Euclidean inner product $\prs$ on $L_{6,5}$ for which $\B$ is orthogonal and $a_i=\langle f_i,f_i\rangle$. We have chosen $\B$ and $\prs$ such that $Z(\G)\cap[\G,\G]^\perp=\{0\}$.
 Then
 $\B$  is a characteristic basis for $\prs$. Then according to Lemma \ref{theo1} the Ricci signature of $\prs$ is  $(4+m^-,m^0,1+m^+)$ where $(m^-,m^0,m^+)$ is the signature of $\mathrm{R}(\ric,\B)$. Here the situation is more complicated than the first case because $\B$ is not a nice basis and the computation of $\mathrm{R}(\ric,\B)$, which is by the way a $(1\times1)$-matrix, is complicated according to formula \eqref{matrix}. We don't need to give the general expression of $\mathrm{R}(\ric,\B)$,  its value when $a_1=a_3=a_4=a_5=a_6=1$  will suffice to our purpose. We get
 $$\mathrm{R}(\ric,\B)=
 \left(\frac{4a_2^6+6a_2^5+6a_2^4-a_2^3-3a_2^2-3a_2-1}{a_2(2a_2^3+3a_2^3+2a_2+1)}\right).$$
  It is clear that we can choose $a_2>0$ such that the signature of $\prs$ is   $(4,0,2)$ or $(5,0,1)$.
  This completes the proof for $L_{6,5}$.\\
  
 $\bullet$ {\bf The Lie algebra  $L_{6,9}$}. \\
  
  The treatment is similar to $L_{5,3}$ and $L_{6,5}$.
   We have $\G=L_{6,9}=\mathrm{span}\{e_1,\ldots,e_6\}$ with the non vanishing Lie brackets
   $$[e_{1},e_{2}]= e_{3}, [e_{1},e_{3}]= e_{4},[e_{2},e_{3}]= e_{5}, $$
    and $\mathbf{Sign}(\G)=\{(2,1,3),(2,2,2),(3,0,3),(3,1,2),(4,0,2)\}.$
   
   For the first type, consider the Euclidean inner product  $\prs$ on $L_{6,9}$  for which $\B=(e_5,e_4,e_3,e_6,e_1,e_2)$ is orthogonal with $a_i=\langle e_i,e_i\rangle$ and  $\dim(Z(\G)\cap[\G,\G]^\perp)=1$. Then
   $\B$  is a characteristic basis for $\prs$ and it is also nice. Then according to Lemma \ref{theo1} the Ricci signature of $\prs$ is  $(2+m^-,1+m^0,2+m^+)$ where $(m^-,m^0,m^+)$ is the signature of $\mathrm{R}(\ric,\B)$. Now a direct computation using \eqref{eqricci} gives $\mathrm{R}(\ric,\B)= (2\ric(e_3,e_3))= \left(\frac{a_3^2-a_2(a_4+a_5)}{a_1a_2}\right)$ and, for suitable values of  the $a_i$, the Ricci signatures of $\prs$ are $(2,1,3)$, $(2,2,2)$ or $(3,1,2)$.
   
   For the second type, we consider the basis
   $\B=(f_1,f_2,f_3,f_4,f_5,f_6)=(e_5,e_4,e_3,e_1,e_2,e_6+e_3+e_1)$. The non vanishing Lie brackets in this basis are
   \[ [f_3,f_4]=-f_2,\;[f_3,f_5]=-f_1,\; [f_3,f_6]=-f_2,\;[f_4,f_5]=f_3,\;[f_4,f_6]=f_2,\;[f_5,f_6]=f_1-f_3. \]Consider the Euclidean inner product $\prs$ on $L_{6,9}$ for which $\B$ is orthogonal and $a_i=\langle f_i,f_i\rangle$. We have chosen $\B$ and $\prs$ such that $Z(\G)\cap[\G,\G]^\perp=\{0\}$.
   Then
   $\B$  is a characteristic basis for $\prs$. Then according to Lemma \ref{theo1} the Ricci signature of $\prs$ is  $(3+m^-,m^0,2+m^+)$ where $(m^-,m^0,m^+)$ is the signature of $\mathrm{R}(\ric,\B)$. Here the situation is more complicated than the first case because $\B$ is not a nice basis and the computation of $\mathrm{R}(\ric,\B)$, which is by the way a $(1\times1)$-matrix, is complicated according to formula \eqref{matrix}. We don't need to give the general expression of $\mathrm{R}(\ric,\B)$,  its value when $a_1=a_2=a_3=a_5=a_6=1$ will suffice to our purpose. We get
   $$\mathrm{R}(\ric,\B)=\left(\frac{12-a_4-35a_4^2}{2(8a_4+3)a_4}\right).$$
    It is clear that we can choose $a_4$ such that the signature of $\prs$ is   $(3,0,3)$ or $(4,0,2)$.
    This completes the proof for $L_{6,9}$.\\

 $\bullet$ {\bf The Lie algebra  $L_{6,6}$}. \\
 
 The situation here is different from the precedent cases. We still have two types of Euclidean products but the order of the  reduced matrix of the Ricci curvature is 2.
 We have $\G=L_{6,6}=\mathrm{span}\{e_1,\ldots,e_6\}$ with the non vanishing Lie brackets
 $$[e_{1},e_{2}]= e_{3}, [e_{1},e_{3}]= e_{4}, [e_{1},e_{4}]= e_{5}, [e_{2},e_{3}]= e_{5} $$
 and $\mathbf{Sign}(\G)=\left\{(2,1,3),(2,2,2),(2,3,1),(3,0,3),(3,1,2),(3,2,1),(4,0,2),(4,1,1),(5,0,1)\right\}.$
 
 For the first type, consider the Euclidean inner product  $\prs$ on $L_{6,6}$  for which $\B=(e_5,e_3,e_4,e_6,e_1,e_2)$ is orthogonal with $a_i=\langle e_i,e_i\rangle$ and  $\dim(Z(\G)\cap[\G,\G]^\perp)=1$. Then
   $\B$  is a characteristic basis for $\prs$ and it is also nice. Then according to Lemma \ref{theo1} the Ricci signature of $\prs$ is  $(2+m^-,1+m^0,1+m^+)$ where $(m^-,m^0,m^+)$ is the signature of $\mathrm{R}(\ric,\B) =\mathrm{diag}(2\ric(e_3,e_3),2\ric(e_4,e_4)) $. Now a direct computation using \eqref{riccinilpotent} gives 
   \[2\ric(e_3,e_3)= \frac{a_3^2-a_2a_4-a_1a_5}{a_1a_2}\esp2\ric(e_4,e_4)= \frac{a_4^2-a_3a_5}{a_1a_3}.\] 
    If we take $a_1=6,a_2=5,a_3=4,a_4=2,a_5
    =a_6=1$, we get $\mathrm{R}(\ric,\B) = 0$ and we can apply the inversion theorem trick to get that for a suitable choice of the $a_i$ the Ricci signature of $\prs$ is $(2,1,3)$, $(2,2,2)$ ,$(2,3,1)$, $(3,1,2)$, $(3,2,1)$ or $(4,1,1)$.

    For the second type, we consider the basis $\B=(f_1,f_2,f_3,f_4,f_5,f_6)$ and the Euclidean inner product $\prs$ on $L_{6,6}$ for which $\B$ is orthogonal and $a_i=\langle f_i,f_i\rangle$. We choose $\B$ and $\prs$ such that $Z(\G)\cap[\G,\G]^\perp=\{0\}$.
  \begin{itemize}

 \item    $\B=(f_1,f_2,f_3,f_4,f_5,f_6)=(e_5,e_3,e_4,e_1,e_2,e_6+e_3)$. The non vanishing Lie brackets in this basis are
      \[ [f_2,f_4]=-f_3,\;[f_2,f_5]=-f_1,\; [f_3,f_4]=-f_1,\;[f_4,f_5]=f_2,\;[f_4,f_6]=f_3,\;[f_5,f_6]=f_1. \]
      Then
      $\B$  is a characteristic basis for $\prs$ and is not nice. Then, according to Lemma \ref{theo1}, the Ricci signature of $\prs$ is $(3+m^-,m^0,1+m^+)$ where $(m^-,m^0,m^+)$ is the signature of $\mathrm{R}(\ric,\B)$.  Now, a direct computation using \eqref{riccinilpotent} and \eqref{matrix} gives
      $$\mathrm{R}(\ric,\B) = \mathrm{diag}\left(\frac{a_2^2}{a_4 a_5},\frac{-a_1 a_2 a_6+a_3^2 \left(a_2+a_6\right)}{a_2 a_4 a_6}\right).$$Thus, for suitable values of $a_i$, the signatures  $(3,0,3)$ and $(4,0,2)$ are realizable as the Ricci signature of $\prs$.

  \item 
 $\B=(f_1,f_2,f_3,f_4,f_5,f_6)=(e_5,e_3,e_4,e_1,e_2,e_6+e_3+e_1)$. The non vanishing Lie brackets are
 \[ [f_2,f_4]=-f_3,\;[f_2,f_5]=-f_1,\; [f_2,f_6]=-f_3,\;[f_3,f_4]=-f_1,\;[f_3,f_6]=-f_1,[f_4,f_5]=f_2,\;[f_4,f_6]=f_3,\]\[[f_5,f_6]=-f_2+f_1. \] Then
 $\B$  is a characteristic for $\prs$. According to Lemma \ref{theo1}, the Ricci signature of $\prs$ is  $(3+m^-,m^0,1+m^+)$ where $(m^-,m^0,m^+)$ is the signature of $\mathrm{R}(\ric,\B)$.
  Here the situation is more complicated than the first case because $\B$ is not a nice basis and the computation of $\mathrm{R}(\ric,\B)$, which is by the way a $(2\times2)$-matrix, is complicated according to formula \eqref{matrix}. We don't need to give the general expression of $\mathrm{R}(\ric,\B)$ ,  its value when $a_1=3,a_2=a_4=a_5=2=a_6 = 1$ will suffice to our purpose. We get
   $$\mathrm{R}(\ric,\B) = \mathrm{diag}\left(
   -\frac{18+66 a_3+121 a_3^2+120 a_3^3+73 a_3^4+24 a_3^5}{18+36 a_3+34 a_3^2+22 a_3^3+6 a_3^4},\frac{-57+8 a_3^2}{8}\right)$$
    It is clear that for suitable values of $a_3$, the signature $(5,0,1)$ is realizable as the Ricci signature of $\prs$.
    This completes the proof for $L_{6,6}$\\
 \end{itemize}

 $\bullet$ {\bf The Lie algebra  $L_{6,7}$}. \\
 
 The treatment is similar to $L_{6,6}$.
 We have $\G=L_{6,7}=\mathrm{span}\{e_1,\ldots,e_5\}$ with the non vanishing Lie brackets
 $$[e_{1},e_{2}]= e_{3}, [e_{1},e_{3}]= e_{4}, [e_{1},e_{4}]= e_{5} $$
 and $\mathbf{Sign}(\G)=\{(2,1,3),(2,2,2),(2,3,1),(3,0,3),(3,1,2),(3,2,1),(4,0,2),(4,1,1),(5,0,1)\}.$
 
 For the first type, consider the Euclidean inner product  $\prs$ on $L_{6,7}$  for which $\B=(e_5,e_3,e_4,e_6,e_1,e_2)$ is orthogonal with $a_i=\langle e_i,e_i\rangle$ and  $\dim(Z(\G)\cap[\G,\G]^\perp)=1$. Then
   $\B$  is a characteristic basis for $\prs$ and it is also nice. Then according to Lemma \ref{theo1} the Ricci signature of $\prs$ is  $(2+m^-,1+m^0,1+m^+)$ where $(m^-,m^0,m^+)$ is the signature of $\mathrm{R}(\ric,\B) =\mathrm{diag}(2\ric(e_3,e_3),2\ric(e_4,e_4)) $. Now a direct computation using \eqref{riccinilpotent} gives 
   \[2\ric(e_3,e_3)= \frac{a_3^2-a_2a_4}{a_1a_2}\esp2\ric(e_4,e_4)= \frac{a_4^2-a_3a_5}{a_1a_3}.\] 
    If we take $a_1=a_2=a_3=a_4=a_5
    =a_6=1$ we get $\mathrm{R}(\ric,\B) = 0$ and we can apply the inversion theorem trick to get that for a suitable choice of the $a_i$ the Ricci signature of $\prs$ is $(2,1,3)$, $(2,2,2)$ ,$(2,3,1)$, $(3,1,2)$, $(3,2,1)$ or $(4,1,1)$.

 For the second type, we consider the basis $\B=(f_1,f_2,f_3,f_4,f_5,f_6)$ and the Euclidean inner product $\prs$ on $L_{6,7}$ for which $\B$ is orthogonal and $a_i=\langle f_i,f_i\rangle$. We choose $\B$ and $\prs$ such that $Z(\G)\cap[\G,\G]^\perp=\{0\}$.
 \begin{itemize}
 
 \item 
      $\B=(f_1,f_2,f_3,f_4,f_5,f_6)=(e_5,e_3,e_4,e_1,e_2,e_6+e_3)$. The non vanishing Lie brackets in this basis are
      \[ [f_2,f_4]=-f_3,\; [f_3,f_4]=-f_1,\;[f_4,f_5]=f_2,\;[f_4,f_6]=f_3. \]   Then
      $\B$  is a characteristic basis for $\prs$ and is not nice. Then according to Lemma \ref{theo1} the Ricci signature of $\prs$ is $(3+m^-,m^0,1+m^+)$ where $(m^-,m^0,m^+)$ is the signature of $\mathrm{R}(\ric,\B)$. Now a direct computation using \eqref{riccinilpotent} and \eqref{matrix} gives
      $$\mathrm{R}(\ric,\B) = \mathrm{diag}\left(\frac{a_2^2}{a_4 a_5},\frac{a_2 a_3^2+\left(-a_1 a_2+a_3^2\right) a_6}{a_2 a_4 a_6}\right).$$Thus for suitable values of $a_i$, the signatures  $(3,0,3)$ and $(4,0,2)$ are realizable as the Ricci signature of $\prs$.

 \item 
 $\B=(f_1,f_2,f_3,f_4,f_5,f_6)=(e_5,e_3,e_4,e_1,e_2,e_6+e_3+e_1)$. The non vanishing brackets are
 \[ [f_2,f_4]=-f_3,\;[f_2,f_6]=-f_3,\; [f_3,f_4]=-f_1,\;[f_3,f_6]=-f_1,\;[f_4,f_5]=f_2 ,[f_4,f_6]=f_3,\;[f_5,f_6]=-f_2 \] Then
 $\B$  is a characteristic for $\prs$. Then according to Lemma \ref{theo1} the Ricci signature of $\prs$ is  $(3+m^-,m^0,1+m^+)$ where $(m^-,m^0,m^+)$ is the signature of $\mathrm{R}(\ric,\B)$.
 Here the situation is more complicated than the first case because $\B$ is not a nice basis and the computation of $\mathrm{R}(\ric,\B)$, which is by the way a $(2\times2)$-matrix, is complicated according to formula \eqref{matrix}. We don't need to give the general expression of $\mathrm{R}(\ric,\B)$,  its value when $a_1= 2,a_2=a_3=a_4=a_6=1$  will suffice to our purpose. We get
   $$\mathrm{R}(\ric,\B) = \mathrm{diag}\left(\frac{8+17a_5-12a_5^2}{4(1+3a_5)a_5},-2\right)$$
    It is clear that we can choose $a_5$ such that the signature of $\prs$ is $(5,0,1)$.
    This completes the proof for $L_{6,7}$
 \end{itemize}
 
\end{proof}

We end this work by giving all the realizable Ricci signatures on nilpotent Lie groups up to dimension 6.

\begin{center}
 \begin{tabular}{|*{2}{c|}}
  \hline
  Lie algebra $\G$ & Realizable Ricci signatures\\
  \hline
  $ L_{3,2}$& $(2,0,1)$\\
  \hline
  $L_{4,2}$&$(2,1,1)$ \\
  \hline
  $L_{4,3}$&$(2,1,1),(2,0,2),(3,0,1)$ \\
  \hline
  $L_{5,2}$& $(2,2,1)$\\
  \hline
  $ L_{5,3}$& $(2,1,2),(2,2,1), (3,0,2), (3,1,1),
(4,0,1)$ \\
  \hline
 $ L_{5,4}$ & $(4,0,1)$ \\
  \hline
  $L_{5,5}$ & $(3,0,2),(3,1,1),(4,0,1)$\\
  \hline
  $L_{5,6},L_{5,7}$ & $(2,0,3),(2,1,2),(2,2,1),(3,0,2),
(3,1,1),(4,0,1)$\\
    \hline
$L_{5,8}$ & $(3,0,2)$ \\
  \hline
  $L_{5,9}$ & $(2,0,3),(2,1,2),(3,0,2)$\\
  \hline
  $L_{6,2}$& $(2,3,1)$\\
  \hline
  $ L_{6,3}$& $(2,2,2),(2,3,1),(3,1,2),(3,2,1),(4,1,1) $\\
  \hline
 $ L_{6,4}$ & $(4,1,1)$ \\
  \hline
  $L_{6,5}$ & $(3,1,2),(3,2,1),(4,0,2),(4,1,1),(5,0,1)$\\
  \hline
  $L_{6,6},L_{6,7}$ & $(2,1,3),(2,2,2),(2,3,1),(3,0,3),(3,1,2)$,\\ &$(3,2,1),(4,0,2), (4,1,1),(5,0,1)$\\
    \hline
$L_{6,8}$ & $(3,1,2)$ \\
  \hline
  $L_{6,9}$ & $(2,1,3),(2,2,2),(3,0,3),(3,1,2),(4,0,2)$\\
  \hline
  $L_{6,10}$&$(4,0,2),(4,1,1),(5,0,1)$\\
  \hline
  $L_{6,11},L_{6,12},L_{6,13},L_{6,20}$&$(3,0,3),(3,1,2),(3,2,1),$\\
  $L_{6,19}(\epsilon),\epsilon\in\{-1,1\}$ &$(4,0,2),(4,1,1),(5,0,1)$\\
    \hline
 $L_{6,14},L_{6,15},L_{6,16},L_{6,17}$&$(2,0,4),(2,1,3),(2,2,2),(2,3,1),(3,0,3)$,\\
 $L_{6,18},L_{6,21}(\epsilon),\epsilon\in\{-1,1\}$ &$(3,1,2),(3,2,1),(4,0,2
  ),(4,1,1),(5,0,1)$\\
  \hline
  $L_{6,19}(0),L_{6,23},L_{6,25}$&$(3,0,3),(3,1,2),(4,0,2) $\\
$ L_{6,24}(\epsilon),\epsilon\in\{-1,0,1\}$ & \\
  \hline
  $L_{6,21}(0)$&$(2,0,4),(2,1,3),(2,2,2),$\\ &$(3,0,3),(3,1,2),(4,0,2)$\\
  \hline
  $L_{6,22}(\epsilon),\epsilon\in\{-1,0,1\}$&$(4,0,2)$\\
     \hline
  $L_{6,26}$&$(3,0,3)$\\
  \hline
  \end{tabular}
  \end{center}
  \begin{center}
  {\bf Table 3}: Realizable Ricci signatures on nilpotent Lie groups of dimension $\leq 6$.
  \end{center}

{\bf References}


\end{document}